\documentclass[reqno,12pt]{amsart}
\textwidth=6.0in \textheight=8.5in 
\oddsidemargin=0.5cm \evensidemargin=0.5cm
\newcommand{\be}{\begin{eqnarray}}
\newcommand{\ee}{\end{eqnarray}}
\newcommand{\ben}{\begin{eqnarray*}}
\newcommand{\een}{\end{eqnarray*}}

\newtheorem{theorem}{Theorem}

\newtheorem{definition}[theorem]{Definition}

\newtheorem{lemma}[theorem]{Lemma}

\newtheorem{proposition}[theorem]{Proposition}
\newtheorem{remark}[theorem]{Remark}

{\catcode`\@=11\global\let\AddToReset=\@addtoreset
\AddToReset{equation}{section}

\AddToReset{theorem}{section}

\usepackage{graphicx}
\usepackage{psfrag}
\usepackage{epsfig,color}

\newcommand{\RR}{\mathbb R}

\begin{document}
\title[Existence of sign changing solutions]
      {On the existence of sign changing bound state solutions of
            a quasilinear equation   }\thanks{This research was supported by
        FONDECYT-1110074 for the first author, and
        FONDECYT-1110268  and FONDECYT-1110003 for the second and third author.}
\author{Carmen Cort\'azar}
\address{Departamento de Matem\'atica, Pontificia
        Universidad Cat\'olica de Chile,
        Casilla 306, Correo 22,
        Santiago, Chile.}
\email{\tt ccortaza@mat.puc.cl}
\author{Marta Garc\'{\i}a-Huidobro}
\address{Departamento de Matem\'atica, Pontificia
        Universidad Cat\'olica de Chile,
        Casilla 306, Correo 22,
        Santiago, Chile.}
\email{\tt mgarcia@mat.puc.cl}
\author{Cecilia S. Yarur}
\address{Departamento de Matem\'atica y C.C.,
        Universidad de Santiago de Chile,
       Casilla 307, Correo 2, Santiago, Chile}
\email{\tt cyarur@usach.cl}



\begin{abstract}
In this paper we  establish the existence of  bound state solutions having a prescribed number of sign change for
$$ \Delta_m u
+f(u)=0,\quad x\in \RR^N, N\ge m>1, \leqno( P) $$
where $\Delta_m u=\nabla\cdot(|\nabla u|^{m-2}\nabla u)$.
Our result is new even for the case of the Laplacian ($m=2$).
\end{abstract}

\maketitle

\section{Introduction and main results}

In this paper we  establish the existence of higher bound state solutions to
$$ \Delta_m u
+f(u)=0,\quad x\in \RR^N, N\ge m>1, \leqno( P) $$
where $\Delta_m u=\nabla\cdot(|\nabla u|^{m-2}\nabla u)$. To this end we consider the radial version of $(P)$, that is
\begin{eqnarray}\label{eq2}
\begin{gathered}
(\phi_m(u'))'+\frac{N-1}{r}\phi_m(u')+f(u)=0,\quad r>0,\quad N\ge m>1,\\
u'(0)=0,\quad \lim\limits_{r\to\infty}u(r)=0,
\end{gathered}
\end{eqnarray}
where $\phi_m(x)=|x|^{m-2}x$, $x\not=0$, and $\phi_m(0)=0$.

Any nonconstant solution to \eqref{eq2} is called a bound state solution. Bound state solutions such that $u(r)>0$ for all $r>0$, are referred to as a first bound state solution, or  a ground state solution.

The existence of a first bound state for $(P)$ has been established by many authors under different regularity and growth assumptions on the nonlinearity $f$, both for the Laplacian operator and the degenerate Laplacian operator, see for example \cite{ap1, ap2}, \cite{b-l1} and \cite{fr}  in the case of a regular $f$ ($f\in C[0,\infty)$) for the case of the semilinear equation, and \cite{fls}, \cite{gst}  and \cite{fg} for both the singular and  regular case in the quasilinear situation.   In the case of the Laplacian, Berestycki and Lions in \cite{b-l2}, proved the existence of infinitely many radially symmetric bound state solutions by using  variational methods when $f$ is an odd function satisfying very mild assumptions, but the existence of solutions with prescribed number of zeros was as an open question for many years.  Jones and K\"upper in \cite{jk} gave a positive answer to this question using a dynamical systems approach and the Conley index for an $f$ which at infinity grows like $|u|^{\sigma}u$, with $\sigma+1<(N+2)/(N-2)$ and $f$ is not necessarily odd.
In \cite{mtw}, for the Laplacian operator and $f$ satisfying a similar superlinear type condition at infinity, the authors prove the existence of bound states of any order by means of a shooting method and a scaling argument.
Grillakis, in \cite{grillakis}, studied the same problem for the Laplacian operator using the degree of a map from a Banach space to itself, allowing less smoothness in the nonlinear term, imposing a  different subcritical restriction at infinity, and having a more restrictive growth condition for $f$ at $0$, namely, that $f$ is Lipschitz at $0$. Finally we mention the work in \cite{bdo2003}, where the authors treat the same problem for the Laplacian operator with the specific  nonlinearity $u-|u|^{-a}u$, with $a\in(0,1)$.

\medskip

We will assume that the function  $f$ satisfies
\begin{enumerate}
\item[$(f_1)$] $f\in C(\mathbb R)$.
\medskip

\item[$(f_2)$]
\begin{enumerate}
\item[(i)] There exist $\beta^-<0<\beta^+$ such that if we set $F(s)=\int_0^sf(t)dt$, it holds that $F(s)<0$ for all$s\in(\beta^-,\beta^+)\setminus\{0\}$, $F(\beta^{\pm})=0$ and $f(s)>0$ for all $s\in(\beta^+,\gamma^+)$ and $f(s)<0$ for all $s\in(\gamma^-,\beta^-)$, where
\ben
\gamma^+&=&\min\{s>\beta^+\ |\ f(s)=0\},\quad\mbox{$\gamma^+=\infty$ if $f(s)>0$ for all $s>\beta^+$,}\\
\gamma^-&=&\max\{s<\beta^-\ |\ f(s)=0\},    \quad\mbox{ $\gamma^-=-\infty$ if $f(s)<0$ for all $s<\beta^-$},
\een

\item[(ii)] $\lim\limits_{s\to\gamma^-}F(s)=L=\lim\limits_{s\to\gamma^+}F(s)$ ($L$ may be finite or not).

\item[(iii)] $f$ is locally Lipschitz in $(\gamma^-,0)\cup(0,\gamma^+)$.
\end{enumerate}
\medskip

\item[$(f_3)$] Let $Q(s):=mNF(s)-(N-m)sf(s)$.

\noindent (a) If $\gamma^+=\infty$ we assume
\begin{enumerate}
\item[(i)] There exists $\bar\beta>\beta^+$   such that  $Q(s)\ge 0$ for all $s\ge\bar\beta$,
\item[(ii)] There exists $\theta\in(0,1)$ such that
\be\label{cond}\lim_{s\to\infty}\Bigl(\inf_{s_1,s_2\in[\theta s,s]}Q(s_2)\Bigl(\frac{|s|^{m-2}s}{f(s_1)}\Bigr)^{N/m}\Bigr)=\infty\ee
\end{enumerate}
(b) If $\gamma^-=-\infty$ we assume
\begin{enumerate}
\item[(i)] There exists $-\bar\beta<\beta^-$   such that  $Q(s)\ge 0$ for all $s\le-\bar\beta$,
\item[(ii)] There exists $\theta\in(0,1)$ such that
\be\label{cond-}\lim_{s\to-\infty}\Bigl(\inf_{s_1,s_2\in[s,\theta s]}Q(s_2)\Bigl(\frac{|s|^{m-2}s}{f(s_1)}\Bigr)^{N/m}\Bigr)=\infty\ee
\end{enumerate}
\medskip

\item[$(f_4)$] \begin{enumerate}
\item [(a)]If $\gamma^+$ is finite, we assume there exists $L_0>0$ and $\bar\beta>\beta^+$ such that
\be\label{condgamma+}
f(s)\le L_0(\gamma^+-s)^{m-1}\quad\mbox{for all $s\in[\bar\beta,\gamma^+)$,}
\ee
\item[(b)]If $\gamma^-$ is finite, we assume there exists $L_0>0$ and $\bar\beta>-\beta^-$ such that
\be\label{condgamma-}
-f(s)\le L_0(s-\gamma^-)^{m-1}\quad\mbox{for all $s\in(\gamma^-,-\bar\beta]$.}
\ee
\end{enumerate}
\end{enumerate}
We note that
condition \eqref{cond} is stronger than the one used in \cite{gst}, where only the limsup is involved. We note that a similar condition was used first by Castro and Kurepa in \cite{ck} for the Laplace operator and later modified in \cite{ghmsc} and \cite{ghmz}, where it was used to establish the existence of infinitely many  solutions  to the Dirichlet problem  in a ball   for a general quasilinear operator.

The uniqueness of the first bound state solution of \eqref{eq2}  has been exhaustively studied during the last thirty years, see
for example the works \cite{cl},  \cite{coff},
\cite{cfe1}, \cite{cfe2}, \cite{fls}, \cite{Kw}, \cite{m}, \cite{ms},
\cite{pel-ser1}, \cite{pel-ser2},  \cite{pu-ser}, \cite{st}. The uniqueness of higher order bound states was first studied in \cite{troy} for a very special nonlinearity in the semilinear case, and later in \cite{cghy,cghy2} for more general nonlinearities obeying some growth restrictions in the so called  sub-Serrin case. The proofs are made for the semilinear case, but as it is mentioned there, they can  be extended to the quasilinear situation $m>1$.

Our main result  is the following:
\begin{theorem}\label{main}
Assume that $f$ satisfies  $(f_1)$-$(f_4)$. Then given any $k\in\mathbb N$, there exists a solution $u$ of $(P)$ having exactly $k$ zeros in $(0,\infty)$.

\end{theorem}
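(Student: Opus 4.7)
The plan is a shooting method in the initial value $\alpha=u(0)$. For each admissible $\alpha$, let $u(\cdot,\alpha)$ denote the solution of the initial value problem for \eqref{eq2} with $u(0)=\alpha$ and $u'(0)=0$. Two tools drive the analysis: the energy
\[
E(r)=\frac{m-1}{m}|u'(r)|^m+F(u(r)),
\]
which satisfies $E'(r)=-\frac{N-1}{r}|u'(r)|^m\le 0$ and therefore traps $u$ in $[\gamma^-,\gamma^+]$ whenever $F(\alpha)\le 0$; and a Pohozaev-type functional whose derivative is proportional to $r^{N-1}Q(u(r))$, which is the mechanism by which $(f_3)$ enters the argument. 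Continuous dependence of $u(\cdot,\alpha)$ on $\alpha$ in any compact interval where $u$ stays inside $(\gamma^-,0)\cup(0,\gamma^+)$ follows from $(f_2)(\mathrm{iii})$.

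I would organise the argument in three stages. Stage one establishes global existence of $u(\cdot,\alpha)$ on $[0,\infty)$ and the above continuous dependence, for $\alpha\in(\beta^+,\gamma^+)$ and symmetrically for $\alpha\in(\gamma^-,\beta^-)$. Stage two proves the oscillation result: the number of zeros of $u(\cdot,\alpha)$ on $(0,\infty)$ tends to infinity as $\alpha\uparrow\gamma^+$ (or $\alpha\to\infty$ when $\gamma^+=\infty$), and likewise on the negative side. Stage three is a topological shooting: set
\[
\mathcal{Z}_k=\{\alpha>\beta^+\ :\ u(\cdot,\alpha)\text{ has at least }k\text{ zeros in }(0,\infty)\},
\]
which is nonempty by stage two and open by continuous dependence. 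Letting $\alpha_k^*=\inf\mathcal{Z}_k$, I would show that $u(\cdot,\alpha_k^*)$ has exactly $k$ zeros, stays in $(\gamma^-,\gamma^+)$, and decays to $0$; the decay is obtained by excluding the alternative asymptotic behaviours $u\to\beta^\pm$ via the strict sign of $F$ on $(\beta^-,\beta^+)\setminus\{0\}$ together with the monotonicity of $E$. An alternation (or reflection) argument between the positive and negative axis then delivers solutions with exactly $k$ zeros for every $k\in\mathbb N$.

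The main obstacle is the oscillation estimate in stage two. The natural approach is a scaling: set $v(\rho)=u(\lambda\rho)/s$, with $s=\alpha$ (or $\gamma^+-\alpha$ when $\gamma^+<\infty$) and $\lambda$ chosen so that the inertial and reactive terms in the equation for $v$ balance. Integrating the Pohozaev-type identity across a sign interval of $u$ produces a lower bound on a phase-plane functional that is essentially
\[
\inf_{s_1,s_2\in[\theta s,s]}Q(s_2)\Bigl(\frac{|s|^{m-2}s}{f(s_1)}\Bigr)^{N/m},
\]
precisely the quantity appearing in \eqref{cond}. Under \eqref{cond}, this forces the rescaled solution to execute arbitrarily many oscillations on a bounded $\rho$-interval, giving as many zeros of $u$ as required. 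When $\gamma^+$ is finite, the parallel scaling using $\gamma^+-\alpha$ as the small parameter is rescued by \eqref{condgamma+} in $(f_4)$, which prevents $f$ from blowing up as $s\uparrow\gamma^+$. Once this oscillation control is in hand, stage three proceeds by the standard topological shooting outlined above.
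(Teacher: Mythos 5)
Your stage three is where the proposal breaks down. Setting $\alpha_k^*=\inf\mathcal Z_k$ and asserting that $u(\cdot,\alpha_k^*)$ has exactly $k$ zeros and decays to $0$ skips the two points that constitute the actual work of the paper. First, ``at least $k$ zeros'' is not an open condition unless the zeros are simple and one separates off the solutions that stop short; the paper does this by splitting, inside $\mathcal N_{k-1}$, into $\mathcal N_k$ (next zero is transversal), $\mathcal P_k$ (the solution gets trapped: its energy $I$ becomes negative, an open condition), and $\mathcal G_k$ (the bound-state alternative), and by proving $\mathcal N_k$ and $\mathcal P_k$ open (Proposition \ref{open-sets}). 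Second, and more seriously, nothing in your argument excludes the possibility that $u(\cdot,\alpha_k^*)$ is a \emph{lower-order} bound state, i.e.\ that values $\alpha\in\mathcal Z_k$ accumulate at some $\bar\alpha\in\mathcal G_j$ with $j<k$; your remark about excluding limits $u\to\beta^\pm$ via the sign of $F$ and monotonicity of the energy only handles the trapped limits, not this scenario. Ruling it out is exactly the content of Lemma \ref{p2m}: near $\bar\alpha\in\mathcal G_j$, every nearby $\alpha\in\mathcal N_j$ lies in $\mathcal P_{j+1}$, proved through a quantitative estimate on the weighted functional $H(r,\alpha)=r^{m'(N-1)}I(r,\alpha)$. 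This lemma is also what drives the induction in the paper's proof (it forces $\alpha_{j+1}^*>\alpha_j^*$, hence $\alpha_{j+1}^*\in\mathcal N_j$, where the trichotomy at level $j+1$ is available); note also that the paper must define $\alpha_k^*$ as the infimum of those $\alpha$ with $(\alpha,\gamma^+)\subseteq\mathcal N_k$, which requires the interval statement of Lemma \ref{nn2}, not merely a sequence of oscillating initial data. None of this machinery, or a substitute for it, appears in your sketch.

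Your stage two is also not implementable as described. A scaling argument in the spirit of McLeod--Troy--Weissler needs an (asymptotically) homogeneous nonlinearity, whereas here $f$ only satisfies the Pohozaev-type condition $(f_3)$ (which admits, e.g., $f(s)=s^{m^*-1}/(\log s)^\lambda$) and $\gamma^\pm$ may be finite with no growth restriction on $f$ elsewhere; there is no limiting rescaled equation to pass to. The paper instead works directly with the identity \eqref{energy}: condition \eqref{cond} gives a lower bound on $r^NI(r,\alpha)$ at the first crossing, but to obtain the second and subsequent zeros one must additionally show that the first minimum value $U_1(\alpha)$ tends to $\gamma^-$ as $\alpha\to\gamma^+$ (Claim 2 of Lemma \ref{nn2}), so that the same mechanism can be restarted on the negative side using $(f_3)(b)$ or, when $\gamma^-$ is finite, the Gronwall argument based on $(f_4)(b)$; when $\gamma^+<\infty$ the role of \eqref{condgamma+} is to force the first crossing radius to infinity, not to control a rescaling. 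Your single oscillation estimate does not address this hump-by-hump induction, so even granting continuous dependence (which itself needs the paper's careful discussion of unique continuation when $m>2$ and at double zeros, since $f$ is not Lipschitz at $0$), the proposal does not yield the theorem.
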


\begin{remark}\label{rem1}{\rm We will see in section \ref{final} that if there exists $s_0>0$ such that for $|s|\ge s_0$, $sf(s)\ge 0$, $F(s)\ge 0$, and
$$\limsup_{|s|\to\infty}\frac{sf(s)}{F(s)}<m^*,\leqno(SC)$$
where as usual $m^*=\frac{Nm}{N-m}$, then $(f_3)$ is satisfied.

We will also see that $(f_3)$ is satisfied for a function $f$ such that
$$f(s)=\frac{s^{m^*-1}}{(\log s)^\lambda},\quad \mbox{for some $\lambda>m/(N-m)$ and $s$ large,}$$
see \cite[Theorem 2]{fg}. This function {\bf does not} satisfy $(SC)$ above.
We also give an example of a nonlinearity for which $\gamma^+$ and $-\gamma^-$ are finite, and an example  for which $\gamma^+<\infty$ and $\gamma^-=-\infty$, allowing {\bf any growth rate} for $f$ at $\pm\infty$ in the first case and any growth rate for $f$ at $\infty$ in the second case.}
\end{remark}
\medskip

To our knowledge, our result in new even for the case of the Laplacian, that is $m=2$.

\bigskip

In order to prove our result, we will study the behavior of the solutions to the initial value problem
\begin{equation}\label{ivp}
\begin{gathered}
(\phi_m(u'))'+\frac{N-1}{r}\phi_m(u')+f(u)=0,\quad r>0,\quad N\ge m>1,\\
u(0)=\alpha,\quad u'(0)=0,
\end{gathered}\end{equation}
for $\alpha\in(\beta^+,\gamma^+)$. By a solution to \eqref{ivp} we mean a $C^1$ function $u$ such that $\phi_m(u')$ is also $C^1$ in its domain.

The idea is to take $\mathcal N_1$ as the set of initial values $\alpha>\beta^+$ such that a solution $u$ of \eqref{ivp} has at least one simple zero and decompose it as
$\mathcal N_1=\mathcal N_2\cup\mathcal P_2\cup\mathcal G_2$, where $\mathcal N_2$ is the set of initial values $\alpha\in\mathcal N_1$ such that $u$ has at least two simple zeros in $[0,\infty)$,  $\mathcal P_2$ is the set of initial values $\alpha\in\mathcal N_1$ such that $u$ has exactly one simple zero in $[0,\infty)$ and from some point on, $u$ remains negative and bounded above by a negative constant, and $\mathcal G_2$ is the set of $\alpha\in\mathcal N_1$ such that $u$ is a solution to \eqref{eq2} having exactly one sign change in $[0,\infty)$. We prove that $\mathcal N_1$, $\mathcal N_2$, $\mathcal P_2$ are open and nonempty. As $\mathcal N_2$, $\mathcal P_2$ and $\mathcal G_2$ are disjoint, we must have that $\mathcal G_2$ is also nonempty. We repeat this idea to obtain higher order bound states.

In section \ref{prel}, we establish some properties of the solutions to \eqref{ivp}. We restrict its domain  to the set of unique extendibility, and  define some crucial sets of initial values.  Then in section \ref{exist} we prove our main result. In some steps of our proof,  we adapt to our situation  techniques used in \cite{gst}.
Finally, in section \ref{final} we give some examples.

\section{Some properties of the solutions of the initial value problem}\label{prel}

The aim of this section is to establish several properties of the solutions to the initial value problem \eqref{ivp}.

{\color{black}It can be seen, see for example \cite{fls, ns1, ns2}, that solutions are defined and unique at least while they remain nonnegative. Also, if a solution reaches the value zero with a nonzero slope, then this solution can be uniquely continued by considering the equation satisfied by its inverse $r=r(s,\alpha)$ in an appropriate neighborhood of such a point, namely
\begin{eqnarray}\label{r-eq0}
\begin{cases}r'=p\\
p'=\displaystyle\frac{N-1}{m-1}\frac{p^2}{r}+|p|^mpf(s)\\
r(0)=r_0>0,\quad p(0)=p_0\not=0,
\end{cases}
\end{eqnarray}
as the right hand side is Lipschitz in the variable $(r,p)$.

Moreover,
by re-writing the equation in \eqref{ivp} in the form
\be\label{cg1}
\begin{cases}u'=\phi_{m'}(v)\\
v'=\displaystyle-\frac{N-1}{r}v-f(u),\\
u(r_0)=u_0\quad v(r_0)=v_0
\end{cases}
\ee
where $m'=m/(m-1)$, we see that such a solution can be uniquely extended until it reaches a double zero  when $m\le 2$, as in this case the right hand side is Lipschitz in a neighborhood of any point $(u_0,v_0)$ with $u_0\not=0$.
 When $m>2$ this is not clear because $\phi_{m'}$  is not locally Lipschitz near $0$.
 Nevertheless, this problem can be handled if $f(u(r_0))\not=0$}:
Assume for simplicity that $f(u_0)<0$, From the second equation in \eqref{cg1},  if $\delta>0$ is small enough to have that for $|r-r_0|<\delta$,
 $$-\frac{N-1}{r}v-f(u)\ge \frac{1}{2}|f(u_0)|,$$
(which is possible because $v(r_0)=0$), then
 $$v'(r)\ge  \frac{1}{2}|f(u_0)|,$$
implying that
$$v(r)\ge \frac{1}{2}|f(u_0)|(r-r_0).$$
Hence, if $(u_1,v_1)$ and $(u_2,v_2)$ are two solutions of \eqref{cg1}, then for $r>r_0$,  from the mean value theorem, and using that $m'-2<0$, we have
\ben
|(u_1'-u_2')(r)|=|(\phi_{m'}(v_1)-\phi_{m'}(v_2))(r)|&=&(m'-1)|\xi|^{m'-2}|(v_1-v_2)(r)|\\
&\le& C(r-r_0)^{m'-2}|(v_1-v_2)(r)|
\een
for some positive constant $C$ and thus,
$$|(u_1-u_2)(r)|\le C(r-r_0)^{m'-1}||v_1-v_2||,$$
where $||\cdot ||$ represents the usual sup norm in $C[r_0-\delta,r_0+\delta]$.
 Also from the second equation in \eqref{cg1}, using that $f$ is locally Lipschitz we find that
 $$|(v_1-v_2)(r)|\le\int_{r_0}^r|v_1'-v_2'|\le C||v_1-v_2||(r-r_0)+K||u_1-u_2||(r-r_0)$$
for some positive constant $K$. Adding up these two last inequalities we have that
 $$||u_1-u_2||+||v_1-v_2||\le C\delta^{m'-1}||v_1-v_2||+(C+K)\delta||u_1-u_2||,$$
 so choosing $\delta$ small enough we deduce $u_1=u_2$ and $v_1=v_2$ and we have unique extendibility.

\medskip

If $f(u(r_0))=0$, there might be a problem.

Let $u(r)=u(r,\alpha)$ be any solution of \eqref{ivp} such that it reaches a first point $r_0$ where $u'(r_0)=0$ and $f(u(r_0))=0$, and set
\begin{eqnarray}\label{funct-10}
I(r,\alpha)=\frac{|u'(r)|^m}{m'}+F(u(r)).\end{eqnarray}
A simple calculation yields
\begin{eqnarray}\label{I'0}
I'(r,\alpha)=-\frac{(N-1)}{r}|u'(r)|^m,
\end{eqnarray}
and therefore, as $N\ge m>1$, we have that $I$ is decreasing in $r$. As $I(r_0,\alpha)=F(u(r_0))<0$, $u(\cdot)$ cannot change sign again. By our assumptions, $u(r_0)$ is either a local minimum or a local maximum for $F$. If it is a local minimum, then the only possibility is that $u(r)\equiv u(r_0)$ for all $r\ge r_0$. Indeed, for $r\ge r_0$,
$$F(u(r))\le \frac{|u'(r)|^m}{m'}+F(u(r))\le F(u(r_0)),$$
proving the claim. In particular  we have  uniqueness if $f$ has only one positive zero and only one negative zero.

\begin{definition}\label{domain}
The domain $D$ of definition of $u$ will be the domain of unique extendibility.
\end{definition}

\begin{remark}\label{rem2}
{\rm From the discussion above, it follows that $D=(0,D(\alpha))$,
where if
$
D(\alpha)<\infty$,
then $D(\alpha)$ is a double zero of $u$, or  $u'(D(\alpha),\alpha)=0$ and
$u(D(\alpha),\alpha)$ is a relative maximum of $F$ (and  thus $F(u(D(\alpha),\alpha))<0$). }
\end{remark}

 We will denote by $u(\cdot,\alpha)$ such solution. By standard theory of ordinary differential equations, the solution depends continuously on the initial data in any compact subset of its domain of definition,  see for example \cite[Theorem 4.3]{cod}. In the rest of this article, and without further mention,
the constants $\beta^{\pm},\gamma^{\pm},\ \bar\beta$,  $\theta$ and $L_0$ will be as defined in our assumptions $(f_2)$-$(f_4)$.

\begin{proposition}\label{basic} Let $f$ satisfy $(f_1)$-$(f_2)$  and let $u(\cdot,\alpha)$ be a solution of \eqref{ivp}.
\begin{enumerate}
\item[(i)] There exists $C(\alpha)>0$ such that $|u(r,\alpha)|\le C(\alpha)$.
\item[(ii)]
If $u(\cdot,\alpha)$ is   defined in $[0,\infty)$ and $\lim_{r\to \infty}u(r,\alpha)=\ell$, then
$$\quad \lim_{r\to \infty}u'(r,\alpha)=0\quad \mbox{and}\quad \ell\ \mbox{is a zero of $f$}.$$

\end{enumerate}
\end{proposition}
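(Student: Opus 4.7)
The natural tool for both parts is the energy $I(r,\alpha)=|u'(r)|^m/m'+F(u(r,\alpha))$ from \eqref{funct-10}. By \eqref{I'0} and the hypothesis $N\ge m>1$, the function $I(\cdot,\alpha)$ is non-increasing on $D$. A short check using the integral form $r^{N-1}\phi_m(u'(r))=-\int_0^r s^{N-1}f(u(s))\,ds$, which yields $u'(r)=O(r^{1/(m-1)})$ near the origin, shows that $I(r,\alpha)\to F(\alpha)$ as $r\to 0^+$, and hence
$$F(u(r,\alpha))\le I(r,\alpha)\le F(\alpha)\quad\text{for every }r\in D.$$

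For part (i) we work with the relevant range $\alpha\in(\beta^+,\gamma^+)$, for which $(f_2)$ gives $F(\alpha)<L$. By continuity of $u(\cdot,\alpha)$, if $u$ were to hit $\gamma^+$ or $\gamma^-$ then $(f_2)(ii)$ would force $F(u)=L>F(\alpha)$, which is impossible; hence $u(r,\alpha)\in(\gamma^-,\gamma^+)$ throughout $D$, and this already yields boundedness when $\gamma^\pm$ are both finite. In the remaining case, say $\gamma^+=\infty$, part $(f_2)(i)$ tells us that $F$ is strictly increasing on $(\beta^+,\infty)$, so the inequality $F(u(r))\le F(\alpha)$ forces $u(r)\le\alpha$ whenever $u(r)>\beta^+$; the case $\gamma^-=-\infty$ is symmetric via the monotonicity of $F$ on $(-\infty,\beta^-)$. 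The main obstacle is exactly this last subcase when $L<\infty$, where the sublevel set $\{F\le F(\alpha)\}$ is a priori unbounded and one is forced to invert $F$ on the appropriate half-line to obtain the estimate.

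For part (ii), since $I$ is non-increasing and bounded below (as $F$ is continuous on the compact range of $u$ obtained in (i)), it admits a limit $I_\infty$ as $r\to\infty$. Combined with $u(r,\alpha)\to\ell$ and the continuity of $F$, this gives $|u'(r,\alpha)|^m/m'\to I_\infty-F(\ell)$; were this limit strictly positive, $|u'|$ would remain bounded away from zero for large $r$, which is incompatible with the convergence of $u$ to a finite limit, so $u'(r,\alpha)\to 0$. To deduce $f(\ell)=0$, rewrite the equation as $(r^{N-1}\phi_m(u'(r)))'=-r^{N-1}f(u(r))$ and argue by contradiction: if $f(\ell)\ne 0$, pick $R$ so large that $f(u(s))$ keeps a fixed sign and $|f(u(s))|\ge|f(\ell)|/2$ for $s\ge R$, then integrating from $R$ to $r$ forces $r^{N-1}|\phi_m(u'(r))|$ to grow like $r^N$, whence $|u'(r)|\to\infty$, contradicting $u'(r,\alpha)\to 0$.
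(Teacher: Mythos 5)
Your proposal is correct and follows essentially the same route as the paper: monotonicity of the energy $I(r,\alpha)=|u'|^m/m'+F(u)$ from \eqref{I'0} gives $F(u(r))\le F(\alpha)$, and (i) then follows from the strict monotonicity of $F$ on $(\beta^+,\gamma^+)$ and $(\gamma^-,\beta^-)$ together with $(f_2)(ii)$ (i.e. $F(\alpha)<L$), while (ii) uses the limit of $I$ to get $u'\to0$. The only deviation is the last sub-step: where the paper obtains $f(\ell)=0$ by applying L'H\^opital's rule twice to $(u(r)-\ell)/r^{m'}$, you integrate $(r^{N-1}\phi_m(u'))'=-r^{N-1}f(u)$ and derive a contradiction with $u'\to0$; this is an equally valid, slightly more elementary variant of the same computation.
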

\begin{proof}
Let $u(r)=u(r,\alpha)$ be any solution of \eqref{ivp}. From \eqref{funct-10} and \eqref{I'0}, and noting that from $(f_2)$  $F$ is bounded below by
\be\label{defbarF}-\bar F=\min_{s\in[\beta^-,\beta^+]}F(s),\quad \bar F>0,
\ee
 we have that
$$F(\alpha)\ge F(u(r))\ge -\bar F$$
and thus $(i)$ follows from $(f_2)(ii)$ and the fact that from $(f_2)(i)$, $F$ is strictly increasing in $(\beta^+,\gamma^+)$ and strictly decreasing in $(\gamma^-,\beta^-)$.

Assume next $\lim_{r\to\infty}u(r)=\ell$.
Then from $(i)$ $\ell$ is finite,  and as $I(\cdot,\alpha)$ is decreasing and bounded below by $-\bar F$, we get that
$ \lim_{r\to\infty}u'(r)$ exists. As $u'$ is integrable, we must have $ \lim_{r\to\infty}u'(r)=0$.
   Moreover, from the equation in \eqref{eq2} and applying L'H\^opital's rule twice, we conclude that
\begin{eqnarray*}
 0=\lim_{r\to\infty}\frac{u(r)-\ell}{r^{m'}}&=& -\lim_{r\to\infty}\frac{r^{\frac{N-1}{m-1}}|u'(r)|}{m'r^{\frac{N-1}{m-1}}r^{m'-1}}\\
 &=&
 -\frac{1}{m'}\Bigl(\lim_{r\to\infty}\frac{r^{N-1}|u'(r)|^{m-1}}{r^N}\Bigr)^{m'-1}\\
 &=&
 -\frac{1}{m'}\Bigl(\lim_{r\to\infty}\frac{r^{N-1}f(u(r))}{Nr^{N-1}}\Bigr)^{m'-1}=-\frac{1}{m'}\Bigl(\frac{f(\ell)}{N}\Bigr)^{m'-1},
 \end{eqnarray*}
 and $(ii)$ follows.
\end{proof}

It can be seen that for $\alpha\in[\beta^+,\gamma^+)$, one has $u(r,\alpha)>0$ and $u'(r,\alpha)<0$ for
$r$ small enough, and thus  we can define the extended real number
$$Z_1(\alpha):=\sup\{r\in(0,D(\alpha))\ |\ u(s,\alpha)>0\mbox{ and }u'(s,\alpha)<0\ \mbox{ for all }s\in(0,r)\}.$$
In what follows, we will denote
$$u(Z_i(\alpha),\alpha)=\lim_{r\uparrow Z_i(\alpha)}u(r,\alpha),\ u'(Z_i(\alpha),\alpha)=\lim_{r\uparrow Z_i(\alpha)}u'(r,\alpha),\
u(T_i(\alpha),\alpha)=\lim_{r\uparrow T_i(\alpha)}u(r,\alpha).$$
We set
\begin{eqnarray*}
{\mathcal N_1}&=&\{\alpha\in[\beta^+,\gamma^+)\ :\ u(Z_1(\alpha),\alpha)=0\quad\mbox{and}\quad u'(Z_1(\alpha),\alpha)<0\}\\
{\mathcal G_1}&=&\{\alpha\in[\beta^+,\gamma^+)\ :\ u(Z_1(\alpha),\alpha)=0\quad\mbox{and}\quad u'(Z_1(\alpha),\alpha)=0\}\\
{\mathcal P_1}&=&\{\alpha\in[\beta^+,\gamma^+)\ :\ u(Z_1(\alpha),\alpha)>0\}.
\end{eqnarray*}
From \cite{gst} ${\mathcal N_1}\not=\emptyset$.
Let
$$\widetilde{\mathcal F}_2=\{\alpha\in\mathcal N_1\ :\ u'(r,\alpha)\le 0\quad\mbox{for all }r\in(Z_1(\alpha),D(\alpha))\}.$$
For $\alpha\in\mathcal N_1\setminus\widetilde{\mathcal F}_2$ we define
$${\color{black}T_1(\alpha):=\sup\{r\in(Z_1(\alpha),D(\alpha))\ :\ u'(r,\alpha)\le0\},\quad {U}_1(\alpha)=u(T_1(\alpha),\alpha).}$$
 Also, for $\alpha \in \mathcal N_1\setminus \widetilde{\mathcal F}_2$ we can define the extended real number
$$Z_2(\alpha):=\sup\{r\in(T_1(\alpha),D(\alpha))\ |\ u(s,\alpha)<0\mbox{ and }u'(s,\alpha)>0\ \mbox{ for all }s\in(T_1(\alpha),r)\}.$$

Let now
$${\mathcal F_2}=\{\alpha\in\mathcal N_1\setminus \widetilde{\mathcal F}_2\ :\ u(Z_2(\alpha),\alpha)<0\},$$
\begin{eqnarray*}
{\mathcal N_2}&=&\{\alpha\in\mathcal N_1\setminus \widetilde{\mathcal F}_2\ :\ u(Z_2(\alpha),\alpha)=0\quad\mbox{and}\quad u'(Z_2(\alpha),\alpha)>0\},\\
{\mathcal G_2}&=&\{\alpha\in\mathcal N_1\setminus \widetilde{\mathcal F}_2\ :\ u(Z_2(\alpha),\alpha)=0\quad\mbox{and}\quad u'(Z_2(\alpha),\alpha)=0\},\\
{\mathcal P_2}&=&\widetilde{\mathcal F}_2\cup
{\mathcal F_2}.
\end{eqnarray*}

For $k\ge 3$, and if ${\mathcal N_{k-1}}\not=\emptyset$, we set
$$\widetilde{\mathcal F}_k=\{\alpha\in\mathcal N_{k-1}\ :\ (-1)^ku'(r,\alpha)\le 0\quad\mbox{for all }r\in(Z_{k-1}(\alpha),D(\alpha))\}.$$
For $\alpha\in \mathcal N_{k-1}\setminus\widetilde{\mathcal F}_k$, we set
{\color{black}\ben
T_{k-1}(\alpha):&=&\sup\{r\in(Z_{k-1}(\alpha),D(\alpha))\ :\ (-1)^ku'(r,\alpha)\le 0\},\\ {U}_{k-1}(\alpha):&=&u(T_{k-1}(\alpha),\alpha).
\een
}
 Next, for $\alpha\in \mathcal N_{k-1}\setminus \widetilde{\mathcal F}_k$, we define the extended real number
\begin{eqnarray*}
Z_k(\alpha):=\sup\{r\in(T_{k-1}(\alpha),D(\alpha))\ |\ (-1)^ku(s,\alpha)<0\mbox{ and }(-1)^ku'(s,\alpha)>0\ \\
\mbox{ for all }s\in(T_{k-1}(\alpha),r)\}.
\end{eqnarray*}
 Finally we set
$${{\mathcal F}_k}=\{\alpha\in\mathcal N_{k-1}\setminus \widetilde{\mathcal F}_k\ :\ (-1)^ku(Z_k(\alpha),\alpha)<0\},$$
\begin{eqnarray*}
{\mathcal N_k}&=&\{\alpha\in\mathcal N_{k-1}\setminus \widetilde{\mathcal F}_k\ :\ u(Z_k(\alpha),\alpha)=0\quad\mbox{and}\quad (-1)^ku'(Z_k(\alpha),\alpha)>0\},\\
{\mathcal G_k}&=&\{\alpha\in\mathcal N_{k-1}\setminus \widetilde{\mathcal F}_k\ :\ u(Z_k(\alpha),\alpha)=0\quad\mbox{and}\quad u'(Z_k(\alpha),\alpha)=0\},\\
{\mathcal P_k}&=&\widetilde{\mathcal F}_k\cup
{{\mathcal F}_k}.
\end{eqnarray*}

Concerning the sets ${\mathcal N_k}$ and ${\mathcal P_k}$ we have:

\begin{proposition}
\label{open-sets}
The sets ${\mathcal N_k}$ and ${\mathcal P_k}$ are open in $[\beta^+,\gamma^+)$ and $\beta^+\in\mathcal P_1$.
\end{proposition}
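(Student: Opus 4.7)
The plan is to prove the two assertions separately, exploiting throughout the Lyapunov functional $I(r,\alpha)$ from \eqref{funct-10}, which by \eqref{I'0} is nonincreasing in $r$ and strictly decreasing wherever $u'\neq 0$, together with continuous dependence of $(u(\cdot,\alpha),u'(\cdot,\alpha))$ on $\alpha$, uniform on compact subintervals of $[0,D(\alpha_0))$.

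For $\beta^+\in\mathcal{P}_1$: at $\alpha=\beta^+$ one has $I(0,\beta^+)=F(\beta^+)=0$. Since $u'(\cdot,\beta^+)<0$ on $(0,Z_1(\beta^+))$, the energy $I(\cdot,\beta^+)$ strictly decreases there, so $I(r,\beta^+)<0$ for all $r\in(0,Z_1(\beta^+))$. If $u(r,\beta^+)$ were to reach $0$ at some finite $r^*=Z_1(\beta^+)$, then $I(r^*,\beta^+)=|u'(r^*,\beta^+)|^m/m'\ge 0$, contradicting the strict bound above. Hence $u(r,\beta^+)>0$ throughout $[0,Z_1(\beta^+))$; if $Z_1(\beta^+)<\infty$ continuity gives $u(Z_1(\beta^+),\beta^+)>0$, and otherwise $u(\cdot,\beta^+)$ decreases on $[0,\infty)$ to a limit $\ell$ which Proposition \ref{basic}(ii) identifies as a zero of $f$, the case $\ell=0$ being again excluded by the same strict-decrease argument. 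In either scenario $u(Z_1(\beta^+),\beta^+)>0$, so $\beta^+\in\mathcal{P}_1$.

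For openness, proceed by induction on $k$. The key ingredient is transversality. For $\alpha_0\in\mathcal{N}_k$, the inductive hypothesis together with the definitions of the sets $\mathcal{N}_j$ and $\mathcal{F}_j$ make every previous zero and turning point of $u(\cdot,\alpha_0)$ transversal (a zero crossed with $u'\ne 0$, or a turning point at a value where $f(u)\ne 0$), while the sign condition $(-1)^ku'(Z_k(\alpha_0),\alpha_0)>0$ guarantees transversality of $Z_k$ itself; continuous dependence on a compact interval containing $Z_k(\alpha_0)$ then produces analogous transversal zeros and turning points for $u(\cdot,\alpha)$ with the same sign pattern, placing $\alpha\in\mathcal{N}_k$ for $\alpha$ close to $\alpha_0$. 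For $\alpha_0\in\mathcal{F}_k\subseteq\mathcal{P}_k$ the same argument applies, now using that $u(Z_k(\alpha_0),\alpha_0)$ lies strictly inside an interval where $f$ has a definite sign, so $f(u(Z_k(\alpha_0),\alpha_0))\ne 0$ and the turning point is transversal in $u'$. For $\alpha_0\in\widetilde{\mathcal{F}}_k\subseteq\mathcal{P}_k$, the solution is monotone after $Z_{k-1}(\alpha_0)$ and converges to a zero $\ell$ of $f$ on the side opposite to $(-1)^k$; assuming $F(\ell)<0$ strictly, pick $R$ so large that $I(R,\alpha_0)\le F(\ell)/2<0$. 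Continuous dependence on $[0,R]$ gives $I(R,\alpha)\le F(\ell)/4<0$ for $\alpha$ close to $\alpha_0$, and monotonicity of $I$ propagates $I(r,\alpha)<0$ to every $r\ge R$. This forces $F(u(r,\alpha))<0$, hence $u(r,\alpha)\in(\beta^-,\beta^+)\setminus\{0\}$ for $r\ge R$, and continuity in $r$ prevents any further sign change, so $\alpha\in\mathcal{P}_k$.

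The main obstacle is verifying that for every $\alpha_0\in\widetilde{\mathcal{F}}_k$ the limit $\ell$ indeed satisfies $F(\ell)<0$ strictly, equivalently $\ell\notin\{0,\beta^{\pm}\}$. The exclusion $\ell\ne 0$ follows by the same strict-decrease argument used above for $\beta^+\in\mathcal{P}_1$. Ruling out $\ell=\beta^{\pm}$ is the truly delicate step: it amounts to forbidding a separatrix-type orbit that monotonically decays onto the equilibria $\beta^{\pm}$ after $Z_{k-1}$, and is expected to require the finer energy and oscillation estimates adapted from \cite{gst}, together with the structural conditions $(f_2)$--$(f_4)$.
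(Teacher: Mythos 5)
Your direct energy argument for $\beta^+\in\mathcal P_1$ is fine (the paper simply quotes \cite{gst} for this), and your openness proof for $\mathcal N_k$ by transversality of the crossing at $Z_k$ plus continuous dependence is essentially the paper's. The genuine problems are in the openness of $\mathcal P_k$. For $\alpha_0\in\mathcal F_k$ you assume that $Z_k(\alpha_0)$ is a finite turning point at which $f(u(Z_k(\alpha_0),\alpha_0))\neq0$; neither assumption is justified. The set $\mathcal F_k$ also contains initial values with $Z_k(\alpha_0)=\infty$, where the solution creeps back towards zero and converges to an interior zero $\ell$ of $f$ without ever turning, so there is no turning point to perturb; and when $Z_k(\alpha_0)<\infty$ the extremum condition only gives, say, $f(u(Z_k(\alpha_0),\alpha_0))\ge0$, which together with $(f_2)$ places $u(Z_k(\alpha_0),\alpha_0)$ in $[\beta^-,0)$ but does not exclude $f(u(Z_k(\alpha_0),\alpha_0))=0$ --- the degenerate case $u'=0=f(u)$ is precisely where $D(\alpha_0)$ may end (Remark \ref{rem2}), and on $(\beta^-,0)$ the sign of $f$ is not definite (only $F<0$ holds there), so your ``definite sign'' justification fails even in the finite case. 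Finally, your $\widetilde{\mathcal F}_k$ case is by your own admission unfinished: the exclusion of $\ell\in\{\beta^-,\beta^+\}$ is left as ``expected to require finer estimates,'' and the possibility $D(\alpha_0)<\infty$ is not treated at all. As it stands the proposal does not prove the proposition.

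The paper's route avoids the case-by-case transversality discussion altogether: it shows that $\alpha\in\mathcal P_k$ if and only if $\alpha\in\mathcal N_{k-1}$ and $I(r_1,\alpha)<0$ for some $r_1\in(0,Z_k(\alpha))$, and then openness is immediate from the monotonicity of $I$ in $r$, the continuity of $I(r_1,\cdot)$, and the openness of $\mathcal N_{k-1}$. The ``only if'' direction is verified in all configurations at once: $Z_k(\bar\alpha)=\infty$ via Proposition \ref{basic} and $F(\ell)<0$; $Z_k(\bar\alpha)=T_{k-1}(\bar\alpha)=D(\bar\alpha)$ via Remark \ref{rem2}; and a finite interior extremum via the sign of $f$ there, which forces $u(Z_k(\bar\alpha),\bar\alpha)\in(\beta^-,0)\cup(0,\beta^+)$, hence $F(u(Z_k(\bar\alpha),\bar\alpha))<0$; the converse holds because at a zero (simple or double) one has $I\ge0$ while $I$ is nonincreasing. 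Note that the delicate point you single out (a monotone orbit decaying to $\beta^{\pm}$, conceivable only when $f(\beta^{\pm})=0$) is exactly what is compressed in the paper into the assertion $F(\ell)<0$; the levels $\gamma^{\pm}$ are excluded automatically since $I(r,\alpha)\le F(\alpha)<L$. So your plan can be repaired by replacing the transversality treatment of $\mathcal F_k$ with this energy characterization, but the step you flagged must either be proved or handled as in the paper, not deferred.
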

\begin{proof}
The proof that ${\mathcal N_k}$ is open is by continuity. Indeed,
let $\bar\alpha\in{\mathcal N_k}$. We may assume that $u(r,\bar\alpha)$ is decreasing in $(T_{k-1}(\bar\alpha),Z_1(\bar\alpha))$,
$u(Z_k(\bar\alpha),\bar\alpha)=0$ and $u'(Z_k(\bar\alpha),\bar\alpha)<0$ (If $k=1$, we set $T_0(\bar\alpha)=0$).

Since
$u'(Z_1(\bar\alpha),\bar\alpha)<0$ we can extend the solution $u(r,\bar\alpha)$ to an interval
$$[T_{k-1}(\bar\alpha),Z_k(\bar\alpha)+\varepsilon]$$ and $u'(r,\bar\alpha)<0$ for all $r\in(T_{k-1}(\bar\alpha),Z_k(\bar\alpha)+\varepsilon]$ and the result follows by continuous dependence of the solution on the initial condition.

\medskip

The proof that $\beta^+\in\mathcal P_1$ and $\mathcal P_1$ is open in $[\beta^+,\gamma^+)$ can be found in \cite{gst}. Let $k>1$. The proof that $\mathcal P_k$ is open is based in the fact that the functional $I$ defined in \eqref{funct-10} is
decreasing in $r$, and $\alpha\in\mathcal P_k$ if and only if $\alpha\in \mathcal N_{k-1}$ and $I(r_1,\alpha)<0$ for some $r_1\in(0,Z_{k}(\alpha))$. Hence the openness of ${\mathcal P_k}$ follows
from the
continuous dependence of solutions to \eqref{ivp} in the initial value $\alpha$ and from the openness of ${\mathcal N_{k-1}}$.

 Let $\bar\alpha\in \mathcal P_k$ and assume first that $Z_{k}(\bar\alpha)=\infty$.
 From Proposition \ref{basic},
 $$\lim_{r\to \infty}I(r,\bar\alpha)=F(\ell)<0.$$

Assume next $Z_k(\bar\alpha)<\infty$ and $Z_{k}(\bar\alpha)=T_{k-1}(\bar\alpha)$. Then   $I(Z_{k}(\bar\alpha),\bar\alpha)<0$, see Remark \ref{rem2}.

 The last possibility is $Z_k(\bar\alpha)<\infty$ and  $Z_{k}(\bar\alpha)$ is a maximum point for $u$ with $u(Z_k(\bar\alpha),\bar\alpha)<0$, or a minimum point of $u$ with $u(Z_k(\bar\alpha),\bar\alpha)>0$,  implying that either
$$0\le -(\phi_m(u'))'(Z_{k}(\bar\alpha),\bar\alpha)=f(u(Z_{k}(\bar\alpha),\bar\alpha))$$
and hence $\beta^-<u(Z_{k}(\bar\alpha),\bar\alpha)<0$, or
$$0\ge -(\phi_m(u'))'(Z_{k}(\bar\alpha),\bar\alpha)=f(u(Z_{k}(\bar\alpha),\bar\alpha))$$
and thus $0<u(Z_{k}(\bar\alpha),\bar\alpha)<\beta^+$.
 Therefore in both cases
 $$I(Z_{k}(\bar\alpha),\bar\alpha)=F(u(Z_{k}(\bar\alpha),\bar\alpha))<0.$$

Conversely, if $\alpha\not\in{\mathcal P_k}$ and $\alpha\in{\mathcal N_{k-1}}$, then $\alpha\in{\mathcal G_k}\cup{\mathcal N_k}$, and thus  $I(r,\alpha)\ge I(Z_{k}(\alpha),\alpha)\ge 0$ for all $r\in (0, Z_{k}(\alpha)).$

\end{proof}

The following lemma is an extension of \cite[Lemma 3.1]{gst}.
\begin{lemma}\label{gstlema}
Let $\alpha\in\mathcal N_k$, $T_k(\alpha)<\infty$, and $U_k(\alpha)\in(\bar\beta,\gamma^+)$ ($U_k(\alpha)\in(\gamma^-,-\bar\beta)$). Let $\bar r\ge T_k(\alpha)$ be the first point after $T_k(\alpha)$ at which $u(\bar r,\alpha)=\bar\beta$ ($u(\bar r,\alpha)=-\bar\beta$). If
\be\label{lema31}\bar r\ge \bar C:=(N-1)(m')^{1/m'}\bar\beta\max\Bigl\{\frac{(\bar F+F(\bar\beta))^{1/m'}}{F(\bar\beta)},\frac{(\bar F+F(-\bar\beta))^{1/m'}}{F(-\bar\beta)}\Bigr\},
\ee
then $\alpha\in \mathcal N_{k+1}$.
\end{lemma}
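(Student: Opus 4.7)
The plan is to argue by contradiction, focusing on the positive case $U_k(\alpha)\in(\bar\beta,\gamma^+)$; the negative case is symmetric and uses the second entry in the maximum defining $\bar C$. Suppose $\alpha\notin\mathcal N_{k+1}$ and let $r_0\in(\bar r,\infty]$ be the first point after $\bar r$ at which either $u(r)=0$ or $u'(r)=0$. I would first observe that on $(T_k,\bar r]$ the solution $u$ decreases strictly from $U_k$ to $\bar\beta$ and that $u'(\bar r)<0$ strictly: if we had $u'(\bar r)=0$, then the equation at $\bar r$ would yield $(\phi_m(u'))'(\bar r)=-f(\bar\beta)<0$, forcing $u'>0$ immediately to the left of $\bar r$, contradicting $u'\le 0$ on $(T_k,\bar r)$. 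Consequently $I(\bar r)=\tfrac{1}{m'}|u'(\bar r)|^m+F(\bar\beta)>F(\bar\beta)>0$. Since $\alpha\notin\mathcal N_{k+1}$, the point $r_0$ must either satisfy $r_0<\infty$ with $u'(r_0)=0$ and $u(r_0)\ge 0$, or $r_0=\infty$ with $u(r)\downarrow\ell\in\{0,\beta^+\}$ (a zero of $f$, by Proposition \ref{basic}$(ii)$). Inspecting the equation at a local minimum gives $f(u(r_0))\le 0$, hence $u(r_0)\in[0,\beta^+]$, and therefore $I(r_0)=F(u(r_0))\le 0$ in every such scenario.

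The core energy estimate runs as follows. On $(\bar r,r_0)$ we have $u'<0$ and $u\in[0,\bar\beta]$, so $-F(u)\le\bar F$, which yields $|u'|^m=m'(I-F(u))\le m'(I(\bar r)+\bar F)$ and thus $|u'|^{m-1}\le\bigl(m'(I(\bar r)+\bar F)\bigr)^{1/m'}$. Integrating the identity $I'(r)=-(N-1)|u'|^m/r$, bounding $1/r\le 1/\bar r$, and changing variables via $u=u(r)$, I obtain
\[
I(\bar r)-I(r_0)\le\frac{N-1}{\bar r}\int_{u(r_0)}^{\bar\beta}|u'|^{m-1}\,du\le\frac{(N-1)(m')^{1/m'}\bar\beta}{\bar r}\bigl(I(\bar r)+\bar F\bigr)^{1/m'}.
\]
Set $G(x):=x-\frac{(N-1)(m')^{1/m'}\bar\beta}{\bar r}(x+\bar F)^{1/m'}$, so the above reads $I(r_0)\ge G(I(\bar r))$.

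An elementary calculation then shows that $G(F(\bar\beta))\ge 0$ is \emph{equivalent} to $\bar r\ge(N-1)(m')^{1/m'}\bar\beta(\bar F+F(\bar\beta))^{1/m'}/F(\bar\beta)$, while $G'\ge 0$ on $[F(\bar\beta),\infty)$ requires only the weaker bound $\bar r\ge(N-1)\bar\beta/\bigl((m')^{1/m}(\bar F+F(\bar\beta))^{1/m}\bigr)$; the ratio between the two thresholds is $m'\bigl(1+\bar F/F(\bar\beta)\bigr)>1$, so the hypothesis $\bar r\ge\bar C$ secures both at once. Combined with the strict inequality $I(\bar r)>F(\bar\beta)$ established above, this yields $I(r_0)\ge G(I(\bar r))>G(F(\bar\beta))\ge 0$, contradicting $I(r_0)\le 0$. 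The main obstacle is the bookkeeping needed to confirm that the constant $\bar C$ in the statement is precisely tuned so as to guarantee simultaneously the monotonicity of $G$ on $[F(\bar\beta),\infty)$ and the sign of $G(F(\bar\beta))$, together with carefully tracking the strict inequalities so as to rule out both the $\mathcal G_{k+1}$ case and the asymptotic scenario $u\to\beta^+$.
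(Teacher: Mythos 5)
Your argument is correct, and it is essentially the argument the paper delegates: the paper omits its own proof and refers to \cite[Lemma 3.1]{gst}, whose proof is precisely this energy estimate — monotonicity of $I$, the bound $|u'|^{m-1}\le\bigl(m'(I(\bar r,\alpha)+\bar F)\bigr)^{1/m'}$, integration of $I'$ with $1/r\le 1/\bar r$ and the change of variables $u=u(r)$, and the check that $\bar r\ge\bar C$ makes $G(x)=x-\frac{(N-1)(m')^{1/m'}\bar\beta}{\bar r}(x+\bar F)^{1/m'}$ nonnegative at $F(\bar\beta)$ and increasing on $[F(\bar\beta),\infty)$ — so your reconstruction, including the tuning of the constant, matches. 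Two harmless imprecisions: under $(f_2)$ the limit $\ell$ is only a zero of $f$ lying in $[0,\beta^+]$ (not necessarily $0$ or $\beta^+$), and $r_0$ need not be a local minimum; but what you actually use, namely $F(\ell)\le 0$ and the one-sided inequality $f(u(r_0,\alpha))\le 0$ coming from $u'<0$ before $r_0$ and $u'(r_0,\alpha)=0$, holds in both cases, so the proof stands.
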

\begin{proof}
The proof of this result is exactly the same as that of Lemma 3.1 in \cite{gst}, with $b=\bar\beta$,  and $R=Z_{k+1}$ and thus we omit it.
\end{proof}

Finally in this section we state a useful and well known Pohozaev type identity
(see \cite{p})
which plays a key role in our proofs.
For a solution $u(\cdot,\alpha)$ of \eqref{ivp}, set
$$E(r,\alpha):=mr^NI(r,\alpha)+(N-m)r^{N-1}\phi_m(u'(r,\alpha))u(r,\alpha).$$
Then
\be\label{energy}E'(r,\alpha)=r^{N-1}Q(u(r,\alpha)),
\ee
and thus, in any interval $[R_1,R_2]$ where $uu'\le 0$ and $u'(R_1,\alpha)=0$ it holds that
\be\label{energy2}
mr^NI(r,\alpha)-mR_1^{N-1}F(u(R_1,\alpha))\ge\int_{R_1}^rt^{N-1}Q(u(t,\alpha))dt.
\ee

\section{Existence of bound states, proof of Theorem \ref{main} }\label{exist}
The proof of our main result is based in the two crucial lemmas below.

\begin{lemma}\label{nn2} Assume that $f$ satisfies $(f_1)$-$(f_4)$. Then,  for each $k\in\mathbb N$, there exists  $\alpha_k\in(\beta^+,\gamma^+)$ with $\alpha_{k}<\alpha_{k+1}$ and $[\alpha_k,\gamma^+)\subset{\mathcal N}_k$.
\end{lemma}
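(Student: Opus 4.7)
I would prove the lemma by induction on $k$.

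\textbf{Base case.} For $k=1$, the claim is that for every $\alpha$ sufficiently close to $\gamma^+$, the solution $u(\cdot,\alpha)$ reaches zero with a strictly negative slope. I would argue by contradiction: assume there is a sequence $\alpha_n\to\gamma^+$ with $\alpha_n\in\mathcal{P}_1\cup\mathcal{G}_1$. Using the Pohozaev identity \eqref{energy} together with $(f_3)(a)$ (and $(f_4)(a)$ when $\gamma^+$ is finite), one derives that $u(\cdot,\alpha_n)$ cannot stay positive forever and cannot reach zero with zero slope, in the spirit of \cite{gst}.

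\textbf{Inductive step.} Assume $[\alpha_k,\gamma^+)\subset\mathcal{N}_k$ and suppose for contradiction there is no $\alpha_{k+1}\in(\alpha_k,\gamma^+)$ with $[\alpha_{k+1},\gamma^+)\subset\mathcal{N}_{k+1}$. Pick $\alpha_n\to\gamma^+$ with $\alpha_n\in\mathcal{N}_k\setminus\mathcal{N}_{k+1}=\widetilde{\mathcal{F}}_{k+1}\cup\mathcal{F}_{k+1}\cup\mathcal{G}_{k+1}$. My plan is to invoke Lemma \ref{gstlema} to force $\alpha_n\in\mathcal{N}_{k+1}$ for $n$ large, contradicting the choice. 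Thus for $n$ large I need to verify: (i) $\alpha_n\notin\widetilde{\mathcal{F}}_{k+1}$, i.e.\ $T_k(\alpha_n)<\infty$; (ii) $U_k(\alpha_n)$ lies in $(\bar\beta,\gamma^+)$ or $(\gamma^-,-\bar\beta)$, according to parity; (iii) the corresponding $\bar r_n$ in Lemma \ref{gstlema} satisfies $\bar r_n\ge \bar C$.

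\textbf{Key sublemma (amplitude and time propagation).} All three requirements follow from showing, by a secondary induction on $j=1,\dots,k$, that as $\alpha\to\gamma^+$ the extremum values $|U_j(\alpha)|$ tend to $\gamma^+$ or $|\gamma^-|$ (to $+\infty$ if the relevant $\gamma^\pm$ is infinite) and that $T_j(\alpha)\to\infty$. The main tool is the Pohozaev energy $E(r,\alpha)$ defined in \eqref{energy}, whose derivative is $r^{N-1}Q(u)$. On each half-cycle between successive extrema $T_{j-1}$ and $T_j$, where $u u'\le 0$, the identity \eqref{energy2} together with the sign condition $Q\ge 0$ for $|u|\ge\bar\beta$ from $(f_3)(i)$ gives a lower bound on $|U_j|$ in terms of $|U_{j-1}|$; the quantitative form \eqref{cond} (resp.\ \eqref{cond-}) is what guarantees that this bound forces $|U_j|\to\gamma^\pm$ as $|U_{j-1}|\to\gamma^\pm$. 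Once $|U_k(\alpha_n)|$ is close to the corresponding $\gamma^\pm$, condition $(f_4)$ (for $\gamma^\pm$ finite) or direct comparison (when infinite) shows that $u(\cdot,\alpha_n)$ requires an arbitrarily long interval $[T_k(\alpha_n),\bar r_n]$ to descend from $U_k(\alpha_n)$ to $\mp\bar\beta$, giving (iii). Property (ii) is immediate from the largeness of $|U_k|$, and (i) follows because, with energy so close to $L$ at each extremum, $u$ cannot remain monotone after $Z_k(\alpha_n)$.

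\textbf{Main obstacle.} The crux is the secondary induction on $j$: propagating the near-$\gamma^\pm$ size of the amplitudes through successive extrema. Since $Q$ may be negative on the intermediate band $[-\bar\beta,\bar\beta]$, the positive contribution to $E$ from the outer arcs where $|u|$ is close to the amplitude must be shown to dominate the losses sustained when $u$ traverses $[-\bar\beta,\bar\beta]$. This is precisely where the strengthened $\liminf$-type condition \eqref{cond} (instead of the weaker $\limsup$ used in \cite{gst}) is essential, as pointed out after $(f_3)$, since a single $\limsup$ would allow a catastrophic drop in amplitude at some intermediate cycle.
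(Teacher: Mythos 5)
Your overall skeleton (induction on $k$; show $T_k(\alpha)<\infty$ for $\alpha$ near $\gamma^+$; show the extremum values approach $\gamma^\mp$; then force $\alpha\in\mathcal N_{k+1}$) is the same as the paper's, but the mechanism you propose for the last step has a genuine gap precisely in the case where the relevant $\gamma^{\pm}$ is infinite. You plan to conclude by verifying the hypothesis $\bar r_n\ge\bar C$ of Lemma \ref{gstlema}, claiming that when $\gamma^{\pm}$ is infinite a ``direct comparison'' shows the solution needs an arbitrarily long interval to descend from $U_k(\alpha_n)$ to $\mp\bar\beta$, and, in your sublemma, that $T_j(\alpha)\to\infty$ as $\alpha\to\gamma^+$. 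Both claims are false in the superlinear regime: when $\gamma^+=\infty$, large amplitude generically produces \emph{fast} oscillation --- for power-like $f$ the zeros and critical points of $u(\cdot,\alpha)$ scale toward the origin as $\alpha\to\infty$ (this is exactly the scaling picture exploited in \cite{mtw}) --- so neither $T_j(\alpha_n)$ nor the crossing point $\bar r_n$ need ever exceed the fixed constant $\bar C$, and no comparison based on $(f_4)$ is available since $(f_4)$ is only assumed when $\gamma^{\pm}$ is finite. Thus the hypothesis of Lemma \ref{gstlema} cannot be verified along your route, and the inductive step collapses exactly where the strengthened hypothesis \eqref{cond}--\eqref{cond-} is supposed to do its work.

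The paper handles the infinite case by a different device which you would need to import. Assuming $\alpha_i\in\mathcal G_{k+1}\cup\mathcal P_{k+1}$, the \emph{contrapositive} of Lemma \ref{gstlema} gives that the first crossing $s_i$ of $\mp\bar\beta$ after $T_k(\alpha_i)$ satisfies $s_i<\bar C$; then the Pohozaev inequality \eqref{energy2}, an elementary lower bound for the point $s_\theta$ where $u=\theta\, U_k(\alpha_i)$ (estimate \eqref{stheta}), and condition \eqref{cond-} yield $I(r,\alpha_i)\ge F(-\bar\beta)+\frac{1}{m'}\bigl(\bar\beta/a\bigr)^m$ on the fixed window $(s_i,\bar C+a)$ for $i$ large, hence $|u'(r,\alpha_i)|\ge\bar\beta/a$ there, which forces $u$ to become positive before $Z_{k+1}(\alpha_i)$ --- a contradiction. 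Your ``long descent'' argument is only valid in the finite-$\gamma^{\pm}$ case, where $(f_4)$ plus a Gronwall estimate indeed gives $s_i\to\infty$ (the paper's Claim 3, second case). A secondary remark: your amplitude-propagation sublemma via the energy $E$ differs from the paper, which proves $U_k(\alpha)\to\gamma^{\mp}$ by integrating $I'=-\frac{N-1}{r}|u'|^m$ and contradicting an inequality of type \eqref{imp1} (the same two-case analysis as in its Claim 1); your variant might be made to work, but as stated it is schematic, and in any event the fatal issue is the treatment of the infinite case described above.
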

\begin{proof}
 If $\gamma^+<\infty$, then the result for $k=1$ follows from the proof of Claim 3 case $(C1)$, in \cite[Theorem 1]{gst} Moreover, in this case the authors prove that $(f_4)(a)$ implies that
  $$\lim_{\alpha\to\gamma^+}r(\bar\beta,\alpha)=\infty,$$ where $r(\bar\beta,\alpha)$ is the first value of $r>0$ such that $u(r,\alpha)=\bar\beta$.

   In the case $\gamma^+=\infty$, the result for $k=1$ follows due to our stronger assumption $(f_3)(a)(ii)$. Indeed, in the proof of Theorem 1, Claim 3 in \cite{gst}, the authors prove that if for some $\theta\in(0,1)$,
 $$\inf_{s_1,s_2,\alpha}Q(s_2)\Bigl(\frac{|\alpha|^{m-2}\alpha}{f(s_1)}\Bigr)^{N/m}\Bigr)\ge C_0,\quad \forall s_1, s_2\in[\theta\alpha,\alpha]$$
where $C_0>0$ is a fixed constant, then $\alpha\in\mathcal N_1$. ($\mathcal N_1=I^-$ in their notation). By our assumption $(f_3)(ii)$, there exists $\alpha_1>0$ such that for all $\alpha\ge\alpha_1$ we have
$$\inf_{s_1,s_2,\alpha}Q(s_2)\Bigl(\frac{|\alpha|^{m-2}\alpha}{f(s_1)}\Bigr)^{N/m}\Bigr)\ge C_0,\quad \forall s_1, s_2\in[\theta\alpha,\alpha]$$
and thus $[\alpha_1,\infty)\subset{\mathcal N}_1$. Clearly, we can assume that $\alpha_1\ge\bar\beta$.

 Let now $k\ge 2$. We will prove our result inductively, starting with $\alpha_1>\beta^+$  such that $[\alpha_1,\gamma^+)\subset{\mathcal N}_1$.
\medskip

\noindent{\bf Claim 1.} There exists   $\bar\alpha\ge\alpha_1$,  such that $T_1(\alpha)<\infty$ for all $\alpha\in[\bar\alpha,\gamma^+)$.

Assume that this is not true, and hence there exists a sequence $\{\alpha_i\}_{i\ge 2}$, $\alpha_i\to\gamma^+$ as $i\to\infty$, such  $T_1(\alpha_i)=\infty$ for all $i\ge2$. Then $u(\cdot,\alpha_i)$ is decreasing,  implying, see Proposition \ref{basic}, that $\lim_{r\to\infty}u(r,\alpha_i)=\ell_i$, $|\ell_i|<|\beta^-|$, see Figure \ref{fig1m}.

As
$$|u'(r,\alpha_i)|^m=|u'(r,\alpha_i)|^m+m'F(u(r,\alpha_i))-m'F(u(r,\alpha_i))\le m'I(r,\alpha_i)+m'\bar F,$$
where $\bar F$ is defined in \eqref{defbarF}, we have that
$$\sup_{r\ge r_i}|u'(r,\alpha_i)|\le m'^{1/m}(I(r,\alpha_i)+\bar F)^{1/m},$$
and as $u'(r,\alpha_i)\to0$ as $r\to\infty$, $I(r,\alpha_i)\to L_i=F(\ell_i)<0$ with $|L_i|\le \bar F$ as $r\to\infty$.

Denote by $r_i:=r(\bar\beta,\alpha_i)$ the first value of $r$ where $u$ takes the value $\bar\beta$. By integrating \eqref{I'0} over $[r_i,\infty)$, we obtain
\be\label{n33}
I(r_i,\alpha_i)-L_i&=&(N-1)\int_{r_i}^\infty\frac{|u'(r,\alpha_i)|^m}{r}dr\nonumber\\
&\le&(N-1)(m')^{1/m'}\frac{(I(r_i,\alpha_i)+\bar F)^{1/m'}}{r_i}(|\ell_i|+\bar\beta)
\ee
implying that
\be\label{imp1}
\frac{r_i(I(r_i,\alpha_i)-L_i)}{(I(r_i,\alpha_i)+\bar F)^{1/m'}}\le (N-1)(m')^{1/m'}(|\beta^-|+\bar\beta).
\ee
We will show that \eqref{imp1} is not possible.

We have two possibilities: either {\bf (a) }$r_i$ is bounded, or {\bf (b) }$r_i\to\infty$ through some subsequence. We recall that in case $\gamma^+<\infty$, only {\bf (b) } is possible.

\noindent Case {\bf (a)}. In this case necessarily $\gamma^+=\infty$. We assume that there is $M>0$ such that $r_i\le M$ for all $i$.

\begin{figure}[h]
\begin{center}
 \includegraphics[keepaspectratio, width=12cm]{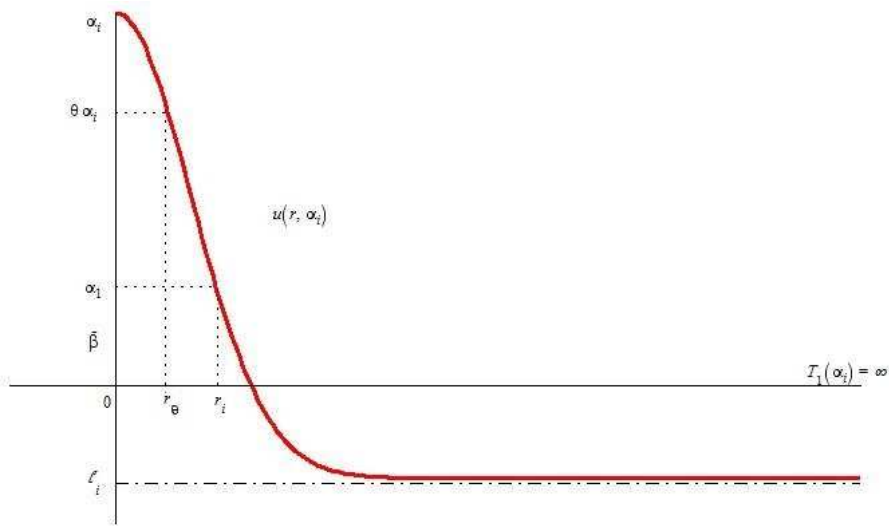}
 \end{center}
 \caption{The shape of a solution satisfying $T_1(\alpha_i)=\infty$}
 \label{fig1m}
 \end{figure}

Let $\theta\in(0,1)$ be as in assumption $(f_3)(ii)$ and let $i$ be large enough to have $\theta\alpha_i>\bar\beta$. By setting  $r_\theta >0$  the point where $u(r_\theta,\alpha_i)=\theta \alpha_i$, integration of \eqref{energy} over $[0,r_i]$ yields
\be\label{copia1}
mr_i^NI(r_i,\alpha_i)&\ge& (\int_{0}^{r_\theta}+\int_{r_\theta}^{r_i})t^{N-1}Q(u(t,\alpha_i))dt\nonumber\\
 &\ge& \int_{0}^{r_\theta}t^{N-1}Q(u(t,\alpha_i))dt\quad\mbox{ (as $Q(u(t,\alpha_i))\ge 0 $ in $[r_\theta,r_i]$) }\\
&\ge& Q(s_2)\frac{r_\theta^N}{N}\quad\mbox{ where we have set $Q(s_2)=\min_{s\in[\theta\alpha_i,\alpha_i]}Q(s)$.}\nonumber
\ee
Now we estimate $r_\theta$:
Set $f(s_1)=\max_{s\in[\theta \alpha_i,\alpha_i]}f(s)$ ($s_1\in [\theta \alpha_i,\alpha_i]$). From the equation in \eqref{ivp},
\ben
-r^{N-1}\phi_m(u'(r,\alpha_i))=\int_{0}^rt^{N-1}f(u(t,\alpha_i)dt\le f(s_1)\frac{r^N}{N},
\een
hence
\ben
-u'(r,\alpha_i)\le \phi_{m'}(f(s_1))\frac{r^{m'-1}}{N^{m'-1}}.
\een
Integrating now this last inequality over $[0,r_\theta]$ we obtain
$$(1-\theta)\alpha_i\le \phi_{m'}(f(s_1))\frac{r_\theta^{m'}}{m'N^{m'-1}}$$
and thus
$$r_\theta\ge \Bigl(\frac{c\alpha_i^{m-1}}{f(s_1)}\Bigr)^{1/m},$$
where $c=((1-\theta)m')^{m-1}N$. Therefore, by $(f_3)(ii)$ we conclude that
$$mr_i^NI(r_i,\alpha_i)\ge \frac{1}{N}Q(s_2)\Bigl(\frac{c\alpha_i^{m-1}}{f(s_1)}\Bigr)^{N/m}\to\infty\mbox{ as $i\to\infty$,}$$
 hence, as $N\ge m$ and by the boundedness assumption on $r_i$, we obtain that
$$r_i^mI(r_i,\alpha_i)\to\infty\quad\mbox{as }i\to\infty$$
contradicting \eqref{imp1}.
\medskip

\noindent Case {\bf (b)}. From \eqref{imp1},
\be\label{imp2}
0\le \frac{I(r_i,\alpha_i)-L_i}{(I(r_i,\alpha_i)+\bar F)^{1/m'}}\le \frac{N-1}{r_i}(m')^{1/m'}(|\beta^-|+\bar\beta).
\ee
We note first that in this case $I(r_i,\alpha_i)\to\infty$ as $i\to\infty$. Indeed, if for some subsequence (still renamed the same) we have that $I(r_i,\alpha_i)$ is bounded, then from \eqref{imp2} it must be that
$$I(r_i,\alpha_i)-L_i\to0\quad\mbox{as }i\to\infty,$$
which is a contradiction because
$$I(r_i,\alpha_i)-L_i\ge F(\bar\beta)-L_i\ge F(\bar\beta)>0.$$
Hence $I(r_i,\alpha_i)\to\infty$ as $i\to\infty$. Then as
$$\frac{I(r_i,\alpha_i)-L_i}{(I(r_i,\alpha_i)+\bar F)^{1/m'}}=(I(r_i,\alpha_i)+\bar F)^{1/m}-\frac{\bar F+L_i}{(I(r_i,\alpha_i)+\bar F)^{1/m'}},$$
we get from \eqref{imp2} that
$$\bar F^{1/m}<(I(r_i,\alpha_i)+\bar F)^{1/m}\le \frac{\bar F+L_i}{(I(r_i,\alpha_i)+\bar F)^{1/m'}}+ \frac{N-1}{r_i}(m')^{1/m'}(|\beta^-|+\bar\beta)\to 0,$$
also a contradiction
and our Claim 1 follows.
\medskip

\noindent{\bf Claim 2.}  $\lim_{\alpha\to\gamma^+}u(T_1(\alpha),\alpha)=\gamma^-$.

We will argue by contradiction and thus assume that there exists  $\gamma^-<M<\beta^-$  and a sequence  $\{\alpha_i\}\to\gamma^+$ such that
$U_1(\alpha_i)=u(T_1(\alpha_i),\alpha_i)\ge M.$ From Claim 1 we may assume that $T_1(\alpha_i)<\infty$ for all $i$.

 Let us denote by $r(\cdot,\alpha_{i})$ the inverse of $u(\cdot,\alpha_{i})$ in the interval $[0,T_1(\alpha_{i})]$. From $(f_2)(i),(ii)$, there exists $\tilde M\in(\beta^+,\gamma^+)$ such that $F(\tilde M)-F(M)>0$. Integrating now $I'$ over $[r(\tilde M,\alpha_{i}),T_1(\alpha_{i})]$ we obtain
\ben
I(r(\tilde M,\alpha_{i}),\alpha_{i})-F(U_1(\alpha_{i}))&=&(N-1)\int_{r(\tilde M,\alpha_{i})}^{T_1(\alpha_{i})}\frac{|u'(r,\alpha_{i})|^m}{r}dr\\
&\le&(N-1)(m')^{\frac{1}{m'}}\frac{(I(r(\tilde M,\alpha_{i}),\alpha_{i})+\bar F)^{\frac{1}{m'}}}{r(\tilde M,\alpha_{i})}(|M|+\tilde M),
\een
hence
$$I(r(\tilde M,\alpha_{i}),\alpha_{i})-F(M)\le (N-1)(m')^{\frac{1}{m'}}\frac{(I(r(\tilde M,\alpha_{i}),\alpha_{i})+\bar F)^{\frac{1}{m'}}}{r(\tilde M,\alpha_{i})}(|M|+\tilde M).$$
We can prove that this inequality is not possible by using the same arguments we used to prove that \eqref{imp1} led to a contradiction, hence our claim follows.

\medskip

\noindent{\bf Claim 3.} There exists $\alpha_2>\alpha_1$ such that  $[\alpha_2,\gamma^+)\subset\mathcal N_2$.

We argue by contradiction  assuming that there exists a sequence $\{\alpha_i\}\to\gamma^+$ such that $\alpha_i\in{\mathcal G}_2\cup {\mathcal P}_2$ for all  $i$ and study the solutions $u(\cdot, \alpha_i)$ in the interval $(0,Z_2(\alpha_i))$. Our argument follows the ideas of the proof of \cite[Theorem 1]{gst}.

From Claim 1  we may assume that $T_1=T_1(\alpha_i)<\infty$.
\smallskip

\noindent {\bf Case $\gamma^-=-\infty$:}
In this case from Claim 2, $U_1(\alpha_i)\to-\infty$ as $i\to\infty$. Also,
by $(f_1)$ and $(f_3)(b)(i)$, $Q$ is bounded below, and as $Q(\beta^{-})=-(N-m)\beta^{-}f(\beta^{-})<0$,  we can set
$$0<\bar Q=-\inf_{s<0}Q(s).$$
 Let $s_\theta(\alpha_i)> T_1(\alpha_i)$ be now the point where $u(s_\theta,\alpha_i)=\theta U_1(\alpha_i)$, where by Claim 2 we may assume that $-\theta U_1(\alpha_i)>\bar\beta$ for all $i$, and let $s_i> T_1(\alpha_i)$ be the first point after $T_1(\alpha_i)$ where $u(s_i,\alpha_i)=-\bar\beta$ (and thus from Lemma \ref{gstlema}, $s_i<\bar C$), see Figure \ref{f2} below.
\begin{figure}[h]
\begin{center}
 \includegraphics[keepaspectratio, width=8cm]{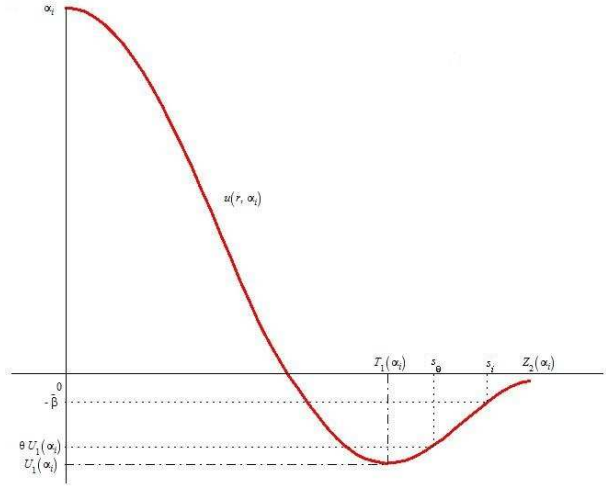}
 \end{center}
 \caption{The shape of a solution such that $\alpha_i\in{\mathcal G}_2\cup {\mathcal P}_2$}
 \label{f2}
 \end{figure}

Then, for any $r\in(T_1(\alpha_i),Z_2(\alpha_i))$, by \eqref{energy2} we have
\ben
mr^NI(r,\alpha_i)-mT_1^NF(U_1(\alpha_i))&\ge& (\int_{T_1}^{s_\theta}+\underbrace{\int_{s_\theta}^{s_i}}_{\ge 0}+\int_{s_i}^r)t^{N-1}Q(u(t,\alpha_i))dt\\
&\ge& \int_{T_1}^{s_\theta}t^{N-1}Q(u(t,\alpha_i))dt-\bar Q \frac{r^N}{N}\\
&\ge& Q(s_2)\Bigl(\frac{s_\theta^N-T_1^N}{N}\Bigr)-\bar Q \frac{r^N}{N}\\
&\ge& Q(s_2)\frac{s_\theta^N}{N}-T_1^N\frac{NmF(s_2)}{N}-\bar Q \frac{r^N}{N}
\een
for some $s_2\in[U_1(\alpha_i),\theta U_1(\alpha_i)]$.
 $s_\theta$ can now be estimated as we did before but setting now
 $f(s_1)=\min_{s\in[U_1(\alpha_i),\theta U_1(\alpha_i)]}f(s)$ ($s_1\in [U_1(\alpha_i),\theta U_1(\alpha_i)]$). We obtain that
\be\label{stheta}s_\theta\ge \Bigl(\frac{c|U_1(\alpha_i)|^{m-1}}{|f(s_1)|}\Bigr)^{1/m},
\ee
where $c=((1-\theta)m')^{m-1}N$.
Therefore, for $r\in(T_1(\alpha_i),Z_2(\alpha_i))$ we have
\be\label{ener-acot}
mr^NI(r,\alpha_i)&\ge&Q(s_2)\frac{s_\theta^N}{N}-T_1^N\frac{NmF(s_2)}{N}+mT_1^NF(U_1(\alpha_i))-\bar Q \frac{r^N}{N}\nonumber\\
&\ge&\frac{1}{N}Q(s_2)\Bigl(\frac{c|U_1(\alpha_i)|^{m-1}}{|f(s_1)|}\Bigr)^{N/m}-mT_1^N \underbrace{(F(s_2)-F(U_1(\alpha_i)))}_{\le 0}-\bar Q\frac{r^N}{N}\nonumber\\
&\ge&\frac{1}{N}Q(s_2)\Bigl(\frac{c|U_1(\alpha_i)|^{m-1}}{|f(s_1)|}\Bigr)^{N/m}-\bar Q\frac{r^N}{N}.
\ee
As in \cite{gst}, for an arbitrary $a>0$ we set $R_a=\min\{\bar C+a, Z_2(\alpha_i)\}$. From Lemma \ref{gstlema}, $R_a\in(s_i, Z_2(\alpha_i))$. Also, by definition $R_a\le \bar C+a$ which is finite and independent of $\alpha_i$, hence, by $(f_3)(ii)$, we can choose $i_0$
\be\label{ener3}
I(r,\alpha_i)\ge F(-\bar\beta)+\frac{1}{m'}\Bigl(\frac{\bar\beta}{a}\Bigr)^m\quad\mbox{for all }r\in(s_i,R_a)\quad\mbox{and }i\ge i_0.
\ee
As in \cite{gst}, we prove that for these $\alpha_i$, it holds that $R_a=\bar C+a$. Indeed, if not, $R_a=Z_2(\alpha_i)$ and, as $\alpha_i\in {\mathcal G}_2\cup {\mathcal P}_2$, it must be that $u'(R_a,\alpha_i)=0$, implying that $I(R_a,\alpha_i)=F(u(R_a,\alpha_i))<F(-\bar\beta)$ contradicting \eqref{ener3}. But, by \eqref{ener3}, $|u'(r,\alpha_i)|\ge \bar\beta/a$ in $(s_i,R_a)$, hence
$$u(R_a,\alpha_i)-u(s_i,\alpha_i)\ge \frac{\bar\beta}{a}(R_a-s_i)$$
implying
\ben
u(R_a,\alpha_i)&\ge& -\bar\beta+\frac{\bar\beta}{a}(\bar C+a-s_i)\\
&=& \frac{\bar\beta}{a}(\bar C-s_i)>0,
\een
a contradiction. Hence there exists $\alpha_2>\alpha_1$ such that $[\alpha_2,\gamma^+)\subset\mathcal N_2$.
\medskip

\noindent {\bf Case $\gamma^->-\infty$:}

In this case from Claim 2, $U_1(\alpha_i)\downarrow\gamma^-$. By $(f_4)(b)$, there exists $\bar\beta>0$ with $\gamma^-<-\bar\beta<\beta^-$, such that
$$\frac{-f(s)}{(s-\gamma^-)^{m-1}}< L_0\quad\mbox{for all $s\in(\gamma^-,-\bar\beta]$}.$$
Let now  $i_0\in\mathbb N$ be such that
$$\gamma^-<U_1(\alpha_i)<-\bar\beta\quad \mbox{for all $i\ge i_0$}.$$
Denote again by $s_i$ the first point after $T_1(\alpha_i)$ where $u(s_i,\alpha_i),\alpha_i)=-\bar\beta$. By integrating the equation in \eqref{ivp} over $[T_1(\alpha_i),r]$, with $r\in(T_1(\alpha_i),s_i]$, we obtain
\ben
r^{N-1}\phi_m(u'(r,\alpha_i))&\le & L_0\int_{T_1(\alpha_i)}^rt^{N-1}(u(t,\alpha_i)-\gamma^-)^{m-1}dt\\
&\le& \frac{L_0}{N}(u(r,\alpha_i)-\gamma^-)^{m-1}r^N,
\een
hence
\ben
0\le u'(r,\alpha_i)&\le& C(u(r,\alpha_i)-\gamma^-)s_i^{1/(m-1)}\\
&=&C(u(T_1(\alpha_i),\alpha_i)+\int_{T_1(\alpha_i)}^ru'(t,\alpha_i)dt-\gamma^-)s_i^{1/(m-1)},
\een
where $C=(L_0/N)^{m'-1}$. Using Gronwall's inequality we get
$$u'(r,\alpha_i)\le Cs_i^{1/(m-1)}(u(T_1(\alpha_i),\alpha_i)-\gamma^-)\exp(Cs_i^{m/(m-1)}),$$
implying as in \cite{gst} that
\be\label{cc1}
\lim_{i\to\infty}s_i=\infty.
\ee
Using now Lemma \ref{gstlema} we conclude that $\alpha_i\in\mathcal N_2$ for $i$ large enough, which contradicts our initial assumption.
Hence there exists $\alpha_2>\alpha_1$ such that $[\alpha_2,\gamma^+)\subset\mathcal N_2$.

\medskip

\noindent{\bf Claim 4.} There exists $\bar\alpha>\alpha_2$ such that $T_2(\alpha)<\infty$ for all $\alpha\in[\bar\alpha,\gamma^+)$.

Assume as in the proof of Claim 1 that this is not true, and hence there exists a sequence $\{\alpha_i\}_{i\ge 3}$, $\alpha_i\to\gamma^+$ as $i\to\infty$, such  $T_2(\alpha_i)=\infty$ for all $i\ge3$. Then $u(\cdot,\alpha_i)$ is increasing, implying, see Proposition \ref{basic}, that $\lim_{r\to\infty}u(r,\alpha_i)=\ell_i$, $\ell_i<\beta^+$.

Following the proof
 in Claim 1, we denote once more by $s_i:=r(-\bar\beta,\alpha_i)$ the first value of $r$ after $T_1(\alpha_i)$  where $u$ takes the value $-\bar\beta$. By integrating \eqref{I'0} over $[s_i,\infty)$, we obtain
\be\label{imp13}
\frac{s_i(I(s_i,\alpha_i)-L_i)}{(I(s_i,\alpha_i)+\bar F)^{1/m'}}\le (m')^{1/m'}(\beta^++\bar\beta).
\ee
We will show that \eqref{imp13} is not possible. Again we have two possibilities: either {\bf (a) }$s_i$ is bounded, or {\bf (b) }$s_i\to\infty$ through some subsequence. We recall that in case $\gamma^->-\infty$, only {\bf (b) } is possible, see \eqref{cc1}. The proof in case {\bf (b) } is the same as the corresponding one in Claim 1. In case {\bf (a)}, some minor changes are needed: the integral in \eqref{copia1} changes to
\ben
mr^NI(s_i,\alpha_i)-mT_1^NF(U_1(\alpha_i))&\ge& (\int_{T_1}^{s_\theta}+\underbrace{\int_{s_\theta}^{s_i}}_{\ge 0})t^{N-1}Q(u(t,\alpha_i))dt\\
&\ge& \int_{T_1}^{s_\theta}t^{N-1}Q(u(t,\alpha_i))dt\\
&\ge& Q(s_2)\Bigl(\frac{s_\theta^N-T_1^N}{N}\Bigr)\\
&\ge& Q(s_2)\frac{s_\theta^N}{N}-T_1^NmF(s_2),
\een
where $s_\theta(\alpha_i)> T_1(\alpha_i)$ is  the first point after $T_1(\alpha_i)$ where $u(s_\theta,\alpha_i)=\theta U_1(\alpha_i)$ and
$Q(s_2):=\min_{s\in [U_1(\alpha_i),\theta U_1(\alpha_i)]}Q(s)$. As $F(U_1(\alpha_i))-F(s_2)\ge0$, we obtain
$$mr^NI(s_i,\alpha_i)\ge Q(s_2)\frac{s_\theta^N}{N},$$
and thus we can continue arguing as in the proof of Claim 1 using the estimate for $s_\theta$ given in \eqref{stheta}.
\begin{figure}[h]
\begin{center}
 \includegraphics[keepaspectratio, width=8cm]{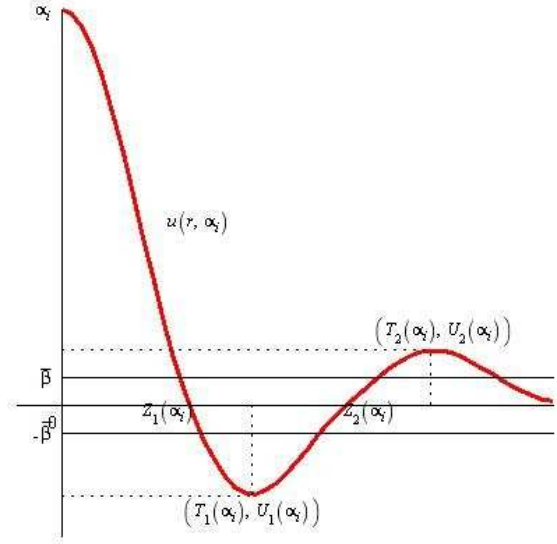}
 \end{center}
 \caption{The shape of $u(r,\alpha_i)$ for $\alpha_i\in \mathcal G_3\cup\mathcal P_3$.}
 \label{f3}
 \end{figure}

\noindent{\bf Proof of the lemma:}

We can repeat the arguments in  Claim 2, to prove that $U_2(\alpha)=u(T_2(\alpha),\alpha)\to\gamma^+$ as $\alpha\to\gamma^+$. Then we argue as in the proof of Claim 3 to show the existence of $\alpha_3>\alpha_2$ such that $[\alpha_3,\infty)\subset\mathcal N_3$, see Figure \ref{f3} above. Repeating the argument as many times as necessary the lemma follows.

\end{proof}
\medskip

In view of this lemma, the natural candidate to belong to $\mathcal G_k$ is
$$\alpha_k^*=\inf\{\alpha\ge\beta^+\ |\ [\alpha,\gamma^+)\subseteq \mathcal N_k\}.$$
In order to prove that $\alpha_k^*\in\mathcal G_k$ we need the following result:
\medskip

\begin{lemma}\label{p2m} Assume that $f$ satisfies $(f_1)$-$(f_2)$, let $k\in\mathbb N$ and let $\bar\alpha\in\mathcal G_k$. If $\{\alpha_i\}\subset\mathcal N_k$ is any sequence  such that $\alpha_i\to\bar\alpha$ as $i\to\infty$, then there exists $i_0$ such that
\be\label{*1m}\alpha_i \in\mathcal P_{k+1}\quad\mbox{for all }i\ge i_0.
\ee
\end{lemma}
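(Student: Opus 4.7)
\medskip

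\noindent\textbf{Proof plan.} The plan is to exploit the characterization of $\mathcal P_{k+1}$ used in the openness argument of Proposition \ref{open-sets}: $\alpha\in\mathcal P_{k+1}$ if and only if $\alpha\in\mathcal N_k$ and $I(r_1,\alpha)<0$ for some $r_1\in(0,Z_{k+1}(\alpha))$. It therefore suffices, for all $i$ large, to exhibit such an $r_1$ in the domain of $u(\cdot,\alpha_i)$. We will treat the case $k$ odd; the even case is symmetric with $\beta^+$ in place of $\beta^-$.

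\smallskip

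\noindent\textbf{Step 1 (small residual energy).} Since $\bar\alpha\in\mathcal G_k$, we have $I(Z_k(\bar\alpha),\bar\alpha)=F(0)=0$ and, by monotonicity and continuity of $I(\cdot,\bar\alpha)$, $I(r,\bar\alpha)\downarrow 0$ as $r\uparrow Z_k(\bar\alpha)$. Given $\eta>0$, first pick $r^*\in(Z_{k-1}(\bar\alpha),Z_k(\bar\alpha))$ with $I(r^*,\bar\alpha)<\eta$, and then use continuous dependence of solutions to \eqref{ivp} on $\alpha$ on the compact interval $[0,r^*]\subset[0,Z_k(\bar\alpha))$ to conclude that, for all $i\ge i_0$, $u(r^*,\alpha_i)>0$ (hence $Z_k(\alpha_i)>r^*$) and $I(r^*,\alpha_i)<2\eta$. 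Monotonicity of $I(\cdot,\alpha_i)$ in $r$ then gives $I(r,\alpha_i)<2\eta$ for every $r\ge r^*$ in the domain of $u(\cdot,\alpha_i)$; in particular $I(Z_k(\alpha_i),\alpha_i)=|u'(Z_k(\alpha_i),\alpha_i)|^m/m'<2\eta$, so $u$ crosses zero at $Z_k(\alpha_i)$ with vanishingly small velocity.

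\smallskip

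\noindent\textbf{Step 2 (dichotomy at the next critical point).} If $T_k(\alpha_i)=\infty$, then by definition $\alpha_i\in\widetilde{\mathcal F}_{k+1}\subset\mathcal P_{k+1}$ and we are done. Otherwise $T_k(\alpha_i)<\infty$ and $U_k(\alpha_i)=u(T_k(\alpha_i),\alpha_i)<0$, and $I(T_k(\alpha_i),\alpha_i)=F(U_k(\alpha_i))<2\eta$. If $U_k(\alpha_i)\in(\beta^-,0)$ then $F(U_k(\alpha_i))<0$ by $(f_2)$(i); we take $r_1=T_k(\alpha_i)$ and are done. The remaining case is $U_k(\alpha_i)\le\beta^-$, in which case $u(\cdot,\alpha_i)$ passes monotonically through every value in $(\beta^-,0)$ on $(Z_k(\alpha_i),T_k(\alpha_i))$. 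Fix a compact subinterval $J=[a,b]\subset(\beta^-,0)$ on which $|F|\ge c_0>0$, and let $[r_a,r_b]\subset(Z_k(\alpha_i),T_k(\alpha_i))$ be the corresponding $r$-interval. As long as $I(r,\alpha_i)\ge 0$ on $[r_a,r_b]$, the identity $|u'|^m/m'=I-F(u)$ together with the bound $I<2\eta$ yields $(m'c_0)^{1/m}\le|u'|\le(m'(2\eta+\bar F))^{1/m}$ on $[r_a,r_b]$, whence $r_b-r_a\ge L>0$ for a constant $L=L(m,F,J)$; integrating the dissipation identity \eqref{I'0} then produces $I(Z_k(\alpha_i),\alpha_i)-I(r_b,\alpha_i)\ge (N-1)m'c_0\, L/r_b$.

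\smallskip

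\noindent\textbf{Step 3 (conclusion and main obstacle).} If $r_b$ (equivalently $T_k(\alpha_i)$) stays bounded as $i\to\infty$, the dissipation lower bound from Step 2 is a positive constant $D>0$ uniform in $i$; combined with $I(Z_k(\alpha_i),\alpha_i)<2\eta$ and $\eta$ taken arbitrarily small, this forces $I(r_b,\alpha_i)<0$ for large $i$, so $r_1=r_b$ does the job. The main obstacle is the case $T_k(\alpha_i)\to\infty$: the trajectory lingers near the degenerate rest point $(u,u')=(0,0)$ and the damping term $(N-1)/r$ weakens. To rule this out I plan to apply the Pohozaev-type identity \eqref{energy}-\eqref{energy2} on $[Z_k(\alpha_i),T_k(\alpha_i)]$ (where $uu'\le 0$ and $u'(T_k(\alpha_i),\alpha_i)=0$) to relate $T_k(\alpha_i)^N F(U_k(\alpha_i))$ to $\int r^{N-1}Q(u)\,dr$, and to extract from this a quantitative lower bound on $|F(U_k(\alpha_i))|$ that is incompatible with the upper bound $F(U_k(\alpha_i))<2\eta$ once $\eta$ is sufficiently small. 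Making this last estimate clean, in the absence of the growth hypotheses $(f_3)$-$(f_4)$, is the technical heart of the argument.
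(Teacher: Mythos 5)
Your Steps 1 and 2 reproduce, correctly, the opening of the paper's own argument (small residual energy near $Z_k(\bar\alpha)$ transferred to $\alpha_i$ by continuity and monotonicity of $I$, the trivial cases $T_k(\alpha_i)=\infty$ or $U_k(\alpha_i)\in(\beta^-,0)$, and a definite amount of crossing ``time'' through a compact subinterval of $(\beta^-,0)$ thanks to the bound \eqref{n2}-type estimate on $|u'|$). But the case you defer in Step 3 --- the crossing radii unbounded --- is not a residual technicality: it is the main case. When $Z_k(\bar\alpha)=\infty$ (the generic situation for $\bar\alpha\in\mathcal G_k$, where $u(\cdot,\bar\alpha)$ decays only as $r\to\infty$), one has $Z_k(\alpha_i)\to\infty$, the dissipation window where $u(\cdot,\alpha_i)\in[\beta^-/2,\beta^-/4]$ sits at radii tending to infinity, and your gain $(N-1)m'c_0L/r_b$ coming from \eqref{I'0} degenerates to $0$, so the unweighted energy $I$ cannot be forced below zero this way. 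Moreover, the rescue you propose via the Pohozaev identity \eqref{energy}--\eqref{energy2} cannot be carried out under the hypotheses of the lemma: only $(f_1)$--$(f_2)$ are assumed, so $Q$ has no sign or growth control at all (that is exactly what $(f_3)$ supplies, and it is used elsewhere, not here), and no useful lower bound on $|F(U_k(\alpha_i))|$ can be extracted from it. So the proof is incomplete precisely at its crux, as you yourself acknowledge.

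The paper closes this gap with a renormalized energy rather than $I$ or Pohozaev: $H(r,\alpha)=r^{m'(N-1)}I(r,\alpha)$ satisfies $H'(r,\alpha)=m'(N-1)r^{m'(N-1)-1}F(u(r,\alpha))$, so $H$ decreases wherever $F(u)<0$, and the weight $r^{m'(N-1)-1}$ grows with $r$, exactly compensating the $1/r$ weakening of the damping that defeats your estimate. One checks that $H(\cdot,\bar\alpha)$ decreases to a limit $L\ge0$ as $r\to Z_k(\bar\alpha)$, transfers $H(Z_k(\alpha_i),\alpha_i)\le L+2\varepsilon$ to nearby $\alpha_i$ by continuity and monotonicity, and then the drop of $H$ across the strip where $u(\cdot,\alpha_i)\in[\beta^-/2,\beta^-/4]$ is at least $m'(N-1)\bigl(Z_k(\alpha_i)\bigr)^{m'(N-1)-1}\frac{|\beta^-|}{4K}C$, with $K$ your uniform bound on $|u'|$ and $C=\inf\{|F(s)|:\ s\in[\beta^-/2,\beta^-/4]\}$. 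This beats $L+2\varepsilon$ when $Z_k(\alpha_i)\to\infty$, while if $Z_k(\bar\alpha)<\infty$ one has $L=0$ and $Z_k(\alpha_i)$ bounded away from $0$, so a small $\varepsilon$ suffices; in either case $I$ becomes negative before the next zero, whence $\alpha_i\in\mathcal P_{k+1}$. Your bounded-radius argument is essentially the second alternative; to complete the proof you need this (or a comparable) weighted functional for the unbounded case, not the Pohozaev identity.
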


\begin{proof}
Let $\bar\alpha\in\mathcal G_k$ and let (see Figure \ref{flema} below)
 $\alpha_i\to\bar\alpha$  with $\alpha_i\in\mathcal N_k$. Assume by contradiction that
$\{\alpha_i\}$ contains a subsequence, still denoted the same, such that $\{\alpha_i\}\subset {\mathcal N}_{k+1}\cup{\mathcal G}_{k+1}$.
Then $T_k(\alpha_i)<\infty$ for all $i$, and since $I(Z_{k+1}(\alpha_i),\alpha_i)\ge 0$, it follows that $I(T_{k}(\alpha_i),\alpha_i)> 0$.
Without loss of generality we may assume that $u(\cdot,\bar\alpha)$ is decreasing in $(T_{k-1}(\bar\alpha),Z_k(\bar\alpha))$ so that $T_k(\alpha_i)$ is a minimum point for $u(\cdot,\alpha_i)$ and therefore
\be\label{n1} u(T_k(\alpha_i),\alpha_i)<\beta^-.
\ee

\begin{figure}[h]
\begin{center}
 \includegraphics[keepaspectratio, width=8cm]{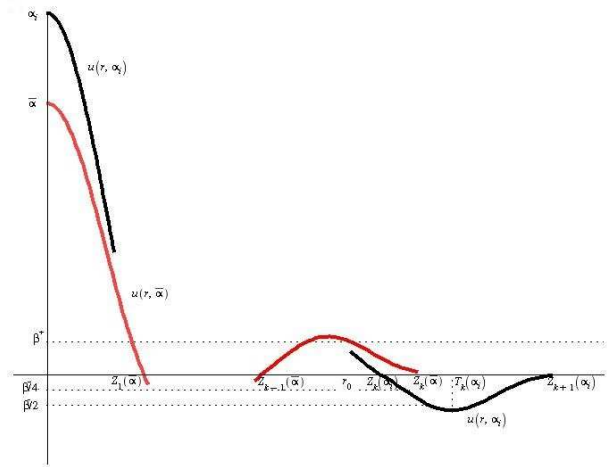}
 \end{center}
 \caption{Situation in Lemma \ref{p2m}}
 \label{flema}
 \end{figure}

Let us denote by $r(\cdot,\alpha_i)$ the inverse of $u(\cdot,\alpha_i)$ in
$(T_{k-1}(\alpha_i),T_k(\alpha_i))$ and let now $\varepsilon\in(0,1)$. Since
$$\lim_{r\to Z_k(\bar\alpha)}I(r,\bar\alpha)=0\quad\mbox{and}\quad \lim_{r\to Z_k(\bar\alpha)}u(r,\bar\alpha)=0,$$
there exists $r_0>T_{k-1}(\bar\alpha)$ such that
$$I(r_0,\bar\alpha)<\varepsilon,\quad 0<u(r_0,\bar\alpha)<\beta^+/2,$$
and therefore by continuity, for  $i$ large enough, $0<u(r_0,\alpha_i)<\beta^+$, $Z_k(\alpha_i)>r_0$, and
$$I(r_0,\alpha_i)<2\varepsilon. $$
Since $I$ is decreasing in $r$, we have that
$$I(r,\alpha_i)<2\varepsilon\quad\mbox{for all}\quad r\in (r_0,Z_{k+1}(\alpha_i)),$$
hence
\be\label{n2}
|u'(r,\alpha_i)|\le \sqrt[m]{\bar F+2}:=K\quad\mbox{for all }r\in (r_0,Z_{k+1}(\alpha_i))
\ee
and $i$ large enough. From \eqref{n1}, $[\beta^-,0]\subset[u(T_k(\alpha_i),\alpha_i),0]$, and from \eqref{n2}, by the mean value theorem we obtain that
\be\label{n3}r\Bigl(\frac{\beta^-}{2},\alpha_i\Bigr)-r\Bigl(\frac{\beta^-}{4},\alpha_i\Bigr)\ge \frac{|\beta^-|}{4K}.
\ee
 Let now
$$H(r,\alpha)=r^{m'(N-1)}I(r,\alpha).$$
Then
$$H'(r,\alpha)=m'(N-1)r^{m'(N-1)-1}F(u(r,\alpha)),$$
implying that for $\alpha=\bar\alpha$, $H'(r,\bar\alpha)<0$ for all $r\in(r_0, Z_k(\bar\alpha))$ and
$$H(r,\bar\alpha)\downarrow L\ge 0$$
as $r\to Z_k(\bar\alpha)$. Also, by choosing a larger $r_0$ if necessary, we may assume $H(r_0,\bar\alpha)<L+\varepsilon$. Thus by continuity we may assume that
$$H(r_0,\alpha_i)\le L+2\varepsilon\quad\mbox{for $i$ large enough.}$$
Also, as $u(r,\alpha_i)<\beta^+$ for $r\in[r_0,Z_k(\alpha_i)]$, $H$ is decreasing in $[r_0,Z_k(\alpha_i)]$ implying
$$H(Z_k(\alpha_i),\alpha_i)\le L+2\varepsilon\quad\mbox{for $i$ large enough.}$$
Integrating $H'(\cdot,\alpha_i)$ over $(Z_k(\alpha_i),r(\frac{\beta^-}{2},\alpha_i))$, we find that
$$
H(r\Bigl(\frac{\beta^-}{2},\alpha_i\Bigr),\alpha_i)-H(Z_k(\alpha_i),\alpha_i)=-m'(N-1)
\int_{Z_k(\alpha_i)}^{r(\frac{\beta^-}{2},\alpha_i)}t^{m'(N-1)-1}|F(u(t,\alpha_i))|dt$$
and thus, observing that since $N\ge m$, we have $m'(N-1)-1\ge m-1>0$ implying
\ben
H(r\Bigl(\frac{\beta^-}{2},\alpha_i\Bigr),\alpha_i)&\le& L+2\varepsilon-m'(N-1)(Z_k(\alpha_i))^{m'(N-1)-1}\int_{Z_k(\alpha_i)}^{r(\frac{\beta^-}{2},\alpha_i)}|F(u(t,\alpha_i))|dt\\
&\le& L+2\varepsilon-m'(N-1)(Z_k(\alpha_i))^{m'(N-1)-1}\int_{r(\frac{\beta^-}{4},\alpha_i)}^{r(\frac{\beta^-}{2},\alpha_i)}|F(u(t,\alpha_i))|dt\\
&\le& L+2\varepsilon-m'(N-1)(Z_k(\alpha_i))^{m'(N-1)-1}(r\Bigl(\frac{\beta^-}{2},\alpha_i\Bigr)-r\Bigl(\frac{\beta^-}{4},\alpha_i\Bigr))C\\
&\le& L+2\varepsilon-m'(N-1)(Z_k(\alpha_i))^{m'(N-1)-1}\frac{b}{4K}C,
\een
where $C:=\inf\{|F(s)|,\ s\in[\frac{\beta^-}{2},\frac{\beta^-}{4}]\}$. If $Z_k(\bar\alpha)=\infty$, by taking $i$ larger if necessary, we conclude that $H(r(\frac{\beta^-}{2},\alpha_i),\alpha_i)<0$ and thus $\alpha_i\in\mathcal P_{k+1}$, a contradiction.
If $Z_k(\bar\alpha)<\infty$, the same conclusion follows by observing that in this case $L=0$ and $Z_k(\alpha_i)$ is bounded below by $\bar r/2$, where $\bar r$ the first value of $r>0$ where $u(\cdot,\bar\alpha)$ takes the value $\beta^+$.
\end{proof}

\begin{remark}\label{rem3}
{\rm Since $\bar\alpha\in\mathcal G_k\subset\mathcal N_{k-1}$, by Proposition \ref{open-sets}, there exists $\delta_0>0$ such that
 $$(\bar\alpha-\delta_0,\bar\alpha+\delta_0)\subset\mathcal N_{k-1}=\mathcal P_k\cup\mathcal G_k\cup\mathcal N_k,$$
therefore, from the previous lemma, there exists $\delta\in(0,\delta_0)$ such that}
$$(\bar\alpha-\delta,\bar\alpha+\delta)\subset \mathcal P_k\cup \mathcal G_k\cup \mathcal P_{k+1}.$$
\end{remark}

\begin{proof}[{\bf Proof of Theorem \ref{main}}]
 From Lemma \ref{nn2}, for each $k\in\mathbb N$ the set $\{\alpha\ge\beta^+\ |\ (\alpha,\gamma^+)\subseteq \mathcal N_k\}$ is nonempty and thus we can set
$$\alpha_k^*:=\inf\{\alpha\ge\beta^+\ |\ (\alpha,\gamma^+)\subseteq \mathcal N_k\}.$$
Clearly,
$$(\alpha_k^*,\gamma^+)\subseteq \mathcal N_k.$$
We will prove that $\alpha_k^*\in\mathcal G_k$ for all $k$. The result will follow by induction over $k$.

 $\alpha_1^*\in \mathcal G_1$: Indeed, if not, then by definition $\alpha_1^*\in{\mathcal N}_1\cup{\mathcal P}_1$, which cannot be because from Proposition \ref{open-sets}, the sets ${\mathcal N}_1$ and ${\mathcal P}_1$ are open and disjoint and $\beta^+\in\mathcal P_1$.

 Assume that the assertion is true for $k=j$, that is, $\alpha_j^*\in\mathcal G_j$. As before, by Proposition \ref{open-sets}, $\alpha_{j+1}^*$ cannot belong to $\mathcal P_{j+1}\cup\mathcal N_{j+1}$, so in order to prove the result we only need to prove that  $\alpha_{j+1}^*\in\mathcal N_j$.
 From Lemma \ref{p2m}, there exists $\bar\alpha\in(\alpha_j^*,\gamma^+)$ such that $\bar\alpha\in\mathcal P_{j+1}$. As $(\alpha^*_{j+1},\gamma^+)\in\mathcal N_{j+1}$, it must be that $\bar\alpha\le \alpha^*_{j+1}$, hence $\alpha_j^*<\alpha_{j+1}^*$ and thus $\alpha_{j+1}^*\in\mathcal N_j$.

\end{proof}

\section{ Concluding remarks and examples}\label{final}

We begin this section by discussing Remark \ref{rem1}.
Assume  that condition $(SC)$ holds  and $s_0$ is such that $sf(s)\ge 0$ $F(s)\ge 0$ for $|s|\ge s_0$. Then there exists $\varepsilon>0$, and $\bar s_0\ge s_0$ such that for $s\ge \bar s_0$,
\be\label{car1}
(i)\quad sf(s)\le (m^*-\varepsilon)F(s)\quad\mbox{and hence}\quad (ii)\quad\frac{F(s)}{s^{m^*-\varepsilon}}\quad \mbox{decreases.}
\ee
Let $\theta\in(0,1)$ be fixed and let $s\ge \bar s_0/\theta$.
Then there exists a positive constant $C$  so that for any $s_2\in[\theta s,s]$ we have
$$Q(s_2)=(Nm-(N-m)\frac{s_2f(s_2)}{F(s_2)})F(s_2)\ge C F( s_2)\ge C F(\theta s).$$
Also, for any $s_1\in[\theta s,s]$,
$$\frac{s^{m-1}}{f(s_1)}\ge \frac{s_1^{m-1}}{f(s_1)},$$
and thus
\ben
Q(s_2)(\frac{s^{m-1}}{f(s_1)})^{N/m}&\ge& C F(\theta s)(\frac{s_1^{m-1}}{f(s_1)})^{N/m}=C F(\theta s)(\frac{s_1^{m}}{s_1f(s_1)})^{N/m}\\
\mbox{by \eqref{car1} (i)}&\ge& \bar CF(\theta s)(\frac{s_1^{m}}{F(s_1)})^{N/m}\\
&=& \bar CF(\theta s)(\frac{s_1^{m^*-\varepsilon}}{F(s_1)})^{N/m}s_1^{N-(m^*-\varepsilon)N/m}\\
\mbox{by \eqref{car1} (ii)} &\ge& \bar CF(\theta s)(\frac{(\theta s)^{m^*-\varepsilon}}{F(\theta s)})^{N/m}s^{N-(m^*-\varepsilon)N/m}\\
&=&\bar C(\frac{(\theta s)^{m^*-\varepsilon}}{F(\theta s)})^{(N-m)/m}(\theta s)^{m^*-\varepsilon}s^{N-(m^*-\varepsilon)N/m}\\
&=&\bar C\theta^{m^*-\varepsilon}(\frac{(\theta s)^{m^*-\varepsilon}}{F(\theta s)})^{(N-m)/m}s^{\varepsilon(N-m)/m}\\
\mbox{by \eqref{car1} (ii)}&\ge& C\theta^{m^*-\varepsilon}(\frac{(\bar s_0)^{m^*-\varepsilon}}{F(\bar s_0)})^{(N-m)/m}s^{\varepsilon(N-m)/m}.
\een
Since the argument is the same for $-s$ large, $(f_3)$ is satisfied.
\medskip

We note that the canonical example $f(s)=|s|^{p-2}s-|s|^{q-2}s$ (which is not Lipschitz at $0$ for $q<2$) satisfies condition $(SC)$ for $1<q<p<m^*$ and thus our theorem applies to this $f$.
\medskip

Next we  deal with \cite[Theorem 2]{fg}, that is we consider $f$ satisfying $(f_1)$ and $(f_2)$ such that for some $s_0>\max\{2e,2\beta^+\}$ and $\lambda>\frac{m}{N-m}$, it holds that
$$f(s)=\frac{|s|^{m^*-2}s}{(\log(|s|))^\lambda}, \quad |s|\ge s_0,$$
where as usual $m^*=Nm/(N-m)$. It can be easily verified that in this case
$$\limsup_{|s|\to\infty}\frac{sf(s)}{F(s)}=m^*
$$
and thus $(SC)$ is not satisfied. We will see that this $f$ satisfies $(f_3)$ and thus  problem \eqref{eq2} with this $f$ has bound states having any prescribed number of zeros.

We first observe that for $s\ge s_0$,
$Q$ is an increasing function. Indeed,
$$Q'(s)=(N(m-1)+m)f(s)-(N-m)sf'(s)=\lambda(N-m)f(s)(\log(s))^{-1}>0,$$
hence for $s_2\in(\theta s,s)$, $Q(s_2)\ge Q(\theta s)$. On the other hand, for $s_1\in[\theta s,s]$,
\ben
\Bigl(\frac{s^{m-1}}{f(s_1)}\Bigr)^{N/m}&=&\Bigl(s^{m-1}s_1^{1-m^*}(\log(s_1))^\lambda \Bigr)^{N/m}\\
&\ge &\Bigl(s^{m-m^*}(\log(\theta s))^\lambda \Bigr)^{N/m}\\
&=&\Bigl((\theta s)^{m-m^*}(\log(\theta s))^\lambda \Bigr)^{N/m}\theta^{(m^*-m)\frac{N}{m}}
\een
and we conclude that
$$\inf_{s_1,s_2\in[\theta s,s]}Q(s_2)\Bigl(\frac{s^{m-1}}{f(s_1)}\Bigr)^{N/m}\ge \theta^{(m^*-m)\frac{N}{m}}Q(\theta s)\Bigl(\frac{(\theta s)^{m-1}}{f(\theta s)}\Bigr)^{N/m}.$$
Hence, it suffices to show that
$$\lim_{s\to\infty}Q(s)\Bigl(\frac{s^{m-1}}{f(s)}\Bigr)^{N/m}=\infty.$$

From the definition of $f$, it holds that there exist a constant $C_0>0$ such that for $s>s_0$,
$$F(s)((\log(s))^\lambda=C_0+\frac{s^{m^*}}{m^*}+\lambda\int_{s_0}^s\frac{F(t)}{t}(\log(t))^{\lambda-1}dt,$$
hence
\ben
NmF(s)-(N-m)sf(s)&=&(\log(s))^{-\lambda}\Bigl(NmF(s)(\log(s))^{\lambda}-(N-m)s^{m^*}\Bigr)\\
&=&(\log(s))^{-\lambda}\Bigl(C_0Nm+Nm\lambda\int_{s_0}^s\frac{F(t)}{t}(\log(t))^{\lambda-1}dt\Bigr)
\een
and thus
$$Q(s)\Bigl(\frac{s^{m-1}}{f(s)}\Bigr)^{N/m}=\frac{C_0Nm+Nm\lambda\int_{s_0}^s\frac{F(t)}{t}(\log(t))^{\lambda-1}dt}{s^{m^*}(\log(s))^{\lambda(m-N)/m}}.$$
Since the denominator in this expression tends to infinity as $s\to\infty$, we may apply L'Hospital's rule to obtain that
\ben
\lim_{s\to\infty}Q(s)\Bigl(\frac{s^{m-1}}{f(s)}\Bigr)^{N/m}
\!\!\!\!&=&\!\!\!\!\lim_{s\to\infty}\frac{\lambda NmF(s)}{s^{m^*}(m^*(\log(s))^{1-\frac{\lambda N}{m}}+\lambda(\frac{m-N}{m})(\log(s))^{-\frac{\lambda N}{m}})}\\
\!\!\!\!&=&\!\!\!\!\lim_{s\to\infty}\frac{\lambda NmF(s)}{s^{m^*}(\log(s))^{-\lambda N/m}(m^*\log(s)+\lambda(\frac{m-N}{m}))}\\
\!\!\!\!&=&\!\!\!\!\lim_{s\to\infty}\frac{\lambda Nmf(s)}{(s^{m^*}(\log(s))^{-\frac{\lambda N}{m}})'(m^*\log(s)+\lambda(\frac{m-N}{m}))+m^*s^{m^*-1}(\log(s))^{-\frac{\lambda N}{m}}}\\
\!\!\!\!&=&\!\!\!\!\lim_{s\to\infty}\frac{\lambda Nm(\log(s))^{1-\lambda+\frac{\lambda N}{m}}}
{(m^*\log(s)-\lambda N/m)(m^*\log(s)+\lambda(\frac{m-N}{m}))+m^*\log(s)}
\een
As $1-\lambda+\frac{\lambda N}{m}>2$ by assumption $\lambda>m/(N-m)$, the result follows and we may apply Theorem \ref{main} to obtain that  problem $(P)$ has solutions with any arbitrary number of nodes.
\medskip

We end this article by giving an example of a nonlinearity for which $\gamma^+$ and $-\gamma^-$ are finite, and an example  for which $\gamma^+<\infty$ and $\gamma^-=-\infty$.

Let $g_1:\mathbb R\to\mathbb R$ be the odd extension of
$$g_1(x)=\begin{cases}
x^3-\sqrt{2x}\quad\mbox{if } x\in[0,2],\\
8-x\quad\mbox{if } x\in[2,8],\\
h(x)\quad\mbox{if } x\in[8,\infty),
\end{cases}
$$
and let $g_2:\mathbb R\to\mathbb R$ be defined by
$$g_2(x)=\begin{cases}
6|x+1|^{-1/3}(x+1)^{-1}\quad\mbox{if } x\in(-\infty,-2),\\
x^3-x\quad\mbox{if } x\in[-2,2],\\
8-x\quad\mbox{if } x\in[2,8],\\
h(x)\quad\mbox{if } x\in[8,\infty),
\end{cases}
$$
where  $h$ is {\em any continuous} function such that $h(8)=0$. Note that $g_1$ satisfies  assumptions  $(f_1)$, $(f_2)$ and $(f_4)$, with $\gamma^-=-8$, $\gamma^+=8$, $\beta^-=-\Bigl(\frac{128}{9}\Bigr)^{1/5}$, $\beta^+=\Bigl(\frac{128}{9}\Bigr)^{1/5}$, and $g_2$ satisfies  assumptions  $(f_1)$, $(f_2)$, $(f_3)$ and $(f_4)$ with $\gamma^+=8$, $\beta^+=\sqrt{2}$, $\beta^-=-\sqrt{2}$ and $\gamma^-=-\infty$.
Note that {\em no restriction} on the growth of $h$ at infinity is needed.

By Theorem \ref{main}, we conclude that for $i=1, 2$, the problem
$$\Delta u+g_i(u)=0,\quad x\in\mathbb R^N,\quad \lim_{|x|\to\infty}u(x)=0,$$
has radially symmetric solutions having any prescribed number of nodes.

\end{document}